\date{today}
\newtheorem{remark}{Remark}[section]
\newtheorem{theorem}{Theorem}[section]
\newtheorem{proposition}{Proposition}[section]
\newtheorem{lemma}{Lemma}[section]
\newcommand{\beq}{\begin{equation}}
	\newcommand{\eeq}{\end{equation}}
\newcommand{\ben}{\begin{eqnarray}}
	\newcommand{\een}{\end{eqnarray}}
\newcommand{\beno}{\begin{eqnarray*}}
	\newcommand{\eeno}{\end{eqnarray*}}
\numberwithin{equation}{section}
\begin{document}
	\title[Suppression of blow-up via the 2-D Couette flow]{Stability of a class of supercritical volume-filling chemotaxis-fluid model near Couette flow}
	\author{Lili~Wang}
	\address[Lili~Wang]{School of Mathematical Sciences, Dalian University of Technology, Dalian, 116024,  China}
	\email{wanglili\_@mail.dlut.edu.cn}
	\author{Wendong~Wang}
	\address[Wendong~Wang]{School of Mathematical Sciences, Dalian University of Technology, Dalian, 116024,  China}
	\email{wendong@dlut.edu.cn}
	\author{Yi~Zhang}
	\address[Yi~Zhang]{School of Mathematical Sciences, Dalian University of Technology, Dalian, 116024,  China}
	\email{zysx@mail.dlut.edu.cn}
	\date{\today}
	\maketitle
	

	\begin{abstract} 
		Consider a class of chemotaxis-fluid model incorporating a volume-filling effect in the sense of Painter
and Hillen (Can. Appl. Math. Q. 2002; 10(4): 501-543), which is a supercritical parabolic-elliptic Keller-Segel system.  As shown by Winkler et al., for any given mass, there exists a corresponding solution of the same mass
that blows up in either finite or infinite time. In this paper, we investigate  the stability properties of the two dimensional Patlak-Keller-Segel-type chemotaxis-fluid model near the Couette flow $ (Ay, 0) $ in $ \mathbb{T}\times\mathbb{R}, $ and show that the solutions are global in time as long as the initial cell mass $M<\frac{2\pi}{\sqrt{3}} $ and the  shear flow is sufficiently strong ($A$ is large enough).

	\end{abstract}
	
	{\small {\bf Keywords:} Keller-Segel; Chemotaxis-fluid model;
		Couette flow;
		Enhanced dissipation; Suppression;
		Blow-up}
	
	\section{Introduction}

Consider the following two-dimensional parabolic-elliptic chemotaxis-fluid model in $ \mathbb{T}\times\mathbb{R} $ with $ \mathbb{T}=[0,2\pi) $:
	\begin{equation}\label{ini}
		\left\{
		\begin{array}{lr}
			\partial_tn+u\cdot\nabla n=\nabla\cdot(\phi(n)\nabla n)-\nabla\cdot(\psi(n)\nabla c), \\
			\triangle c+n-c=0, \\
			\partial_tu+ u\cdot\nabla u+\nabla P=\triangle u+n\nabla\Phi_0,\quad\nabla\cdot u=0, \\
			(n,u)\big|_{t=0}=(n_{\rm in},u_{\rm in}),
		\end{array}
		\right.
	\end{equation}
where the function $n$ represents the cell density, $c$ denotes the chemoattractant density, and $u$ denotes the velocity of fluid. In addition, $P$ is the pressure and $\Phi_0$ is the potential function. The functions $\phi$ and $\psi$ are assumed to belong to $C^2([0,\infty))$ and to satisfy $\phi>0$ on $[0,\infty)$, $\psi(0)=0$ and $\psi> 0$ on $(0,\infty)$.
Based on a biased random
walk analysis, Hillen and Painter \cite{painter}  derived a functional link between the self-diffusivity $\phi(n)$ and the chemotactic sensitivity $\psi(n)$ by assuming that the cells’ movement is inhibited near points where the cells
are densely packed.  That, in a
non-dimensional version, takes the form
\beno
\phi(n)=Q(n)-uQ'(n),\quad
\psi(n)=nQ(n),
\eeno
where $Q(n)$ denotes the density-dependent probability for a cell to find space somewhere in its current neighborhood. Since this
probability is basically unknown, different choices for $ Q $ are conceivable, each of these providing a certain version of (\ref{ini}) that
incorporates this so-called volume-filling effect (see \cite{winkler0}). Here we choose $Q(n)=n+1.$
Assume $\Phi_0=y$ as Zeng-Zhang-Zi in \cite{zeng}. Then \eqref{ini} becomes 
	\begin{equation}\label{ini-}
		\left\{
		\begin{array}{lr}
			\partial_tn+u\cdot\nabla n=\triangle n-\nabla\cdot((n^2+n)\nabla c), \\
			\triangle c+n-c=0, \\
			\partial_tu+ u\cdot\nabla u+\nabla P=\triangle u+ne_2,\quad\nabla\cdot u=0, \\
			(n,u)\big|_{t=0}=(n_{\rm in},u_{\rm in}).
		\end{array}
		\right.
	\end{equation}


	Let us  briefly recall some developments on the system (\ref{ini}).  If $ u=0 $ and $ \Phi_0=0 $, the system (\ref{ini}) is reduced to the classical parabolic-elliptic Patlak-Keller-Segel system, which is proposed as a macroscopic model for chemotactic cell migration and was jointly developed by Patlak \cite{Patlak1}, Keller and Segel \cite{Keller1}. In details, the classical Patlak-Keller-Segel (PKS) system is stated as follows:
	\begin{equation}\label{eq:pks}
		\left\{
		\begin{array}{lr}
			\partial_tn=\nabla\cdot(\phi(n)\nabla n)-\nabla\cdot(\psi(n)\nabla c), \\
			\tau\partial_{t}c=\triangle c+n-c,
		\end{array}
		\right.
	\end{equation}
	where $ \tau=1, 0 $ corresponds to the parabolic-parabolic case and the parabolic-elliptic case, respectively.
	For $\phi(n)=1$ and $ \psi(n)=n, $ (\ref{eq:pks}) is the most common formulation of the Patlak-Keller-Segel model, and the last term of $(\ref{eq:pks})_1$ has a critical Fujita exponent in 2D. 
	As long as the dimension of space is higher than one, the solutions of the classical PKS system may blow up in finite time. 
	The 2D PKS model of parabolic-parabolic has a critical mass of $8\pi$, if the cell mass $M:=||n_{\rm in}||_{L^1}$ is less than $8\pi$,  Calvez-Corrias \cite{Calvez1} obtained the solutions of the system are global in time,
	while the cell mass is greater than $8\pi$, the solutions will blow up in finite time by Schweyer in \cite{Schweyer1}.
	Moreover, the 2D parabolic-elliptic Patlak-Keller-Segel system is globally well-posed if and only if the total mass $M\leq8\pi$ by Wei in \cite{wei11}.
	When the spatial dimension is higher than two, the solutions of the PKS system will blow up for any initial mass, meaning that
	no mass threshold for aggregation exists in that case (for example, see  \cite{nagai1995}, \cite{Na},  \cite{sw2019}, \cite{winkler1} etc.).
Generally, for the supercritical case of  $ \psi(n)=n(n+1)^{\alpha} $ with $\alpha>0$, blow-up phenomena may occur for any initial mass in 2D.
For example, Winkler in \cite{winkler0} mentioned a blow-up criterion: 
\ben
\frac{\psi(n)}{\phi(n)}\geq C_0 n
\een
for some $C_0>0$ and sufficiently large $n$ in the two-dimensional case, though he considered the parabolic-parabolic case at that time.
 When the spatial dimension is equal to two or greater than or equal to 3,  Horstmann-Winkler \cite{HW} showed that there exist solutions of (\ref{eq:pks})  that become unbounded for $ \alpha>0 $. For more related results, we refer to \cite{SM,wang2024,winkler0,winkler2} and the references therein.
	
	
	{\it An interesting question:  Can the stabilizing effect of the moving fluid suppress the finite-time blow-up?} 
	
	As we all know, when considering $ \phi(n)=1 $ and $ \psi(n)=n, $ some progresses have been made in proving the suppression of the chemotactic blow-up by the presence of fluid flow as follows:
	
	{\bf 2D case.} For the parabolic-elliptic PKS system in 2D, Kiselev-Xu \cite{Kiselev1} suppressed the  blow-up by stationary relaxation
	enhancing flows or time-dependent Yao-Zlatos near-optimal mixing flows in $\mathbb{T}^d$.
He \cite{he0} investigated the suppression of blow-up by a large strictly monotone shear flow for the parabolic-parabolic PKS model in $\mathbb{T}\times\mathbb{R}$. 
	For the coupled PKS-NS system, Zeng-Zhang-Zi firstly considered the 2D PKS-NS system near the Couette flow in $\mathbb{T}\times\mathbb{R}$ \cite{zeng}.
	Li-Xiang-Xu \cite{Li0} studied the suppression of blow-up in PKS-NS system via the Poiseuille flow in $\mathbb{T}\times\mathbb{R}$, and Cui-Wang considered Poiseuille flow with the Navier-slip boundary of PKS-NS system and obtained the solutions are global without any smallness condition \cite{cui1}.
	
	{\bf 3D case.} For the PKS system of parabolic-elliptic case in 3D, there are few research results in this area. Bedrossian-He investigated the suppression of blow-up by shear flows in $\mathbb{T}^3$ and $\mathbb{T}\times\mathbb{R}^2$ by assuming the initial mass is less than $8\pi$ in \cite{Bedro2}. 
	Shi-Wang \cite{SW} considered the suppression effect of the flow $(y,y^2,0)$ in $\mathbb{T}^2\times\mathbb{R}$, and Deng-Shi-Wang \cite{wangweike1} proved the Couette flow with a sufficiently large amplitude prevents
	the blow-up of solutions in the whole space. Besides, for the stability effect of buoyancy,
	Hu-Kiselev-Yao considered the blow-up suppression for the PKS system coupled with a fluid flow that obeys Darcy's law for incompressible porous media via buoyancy force \cite{Hu0}; see also \cite{Hu2023}.
	For the 3D coupled PKS-NS system,  Cui-Wang-Wang \cite{cui3} considered the Couette flow  in a finite channel, and obtained the solutions of the 3D linearized PKS-NS system are global as long as the initial cell mass is sufficiently small.
	
	However, for $ \phi(n)=1 $ and $ \psi(n)=n(n+1)^{\alpha} $ with $ \alpha>0 $,  up to now it is still unknown whether the blow-up does not happen provided that the amplitude of some shear flow is sufficiently large. In this paper, we consider $ \phi(n)=1, $ $ \psi(n)=n(n+1) $ and the main goal is to investigate these types of issues.
	
	Before expressing our main theorem, we first introduce a perturbation $v$ around the two-dimensional Couette flow $(Ay,0)$, which $v(t,x,y)=u(t,x,y)-(Ay,0)$ satisfying $ v|_{t=0}=v_{\rm in}=(v_{1,\rm in}, v_{2,\rm in}) $.  Then we rewrite the system (\ref{ini-}) into
	\begin{equation}\label{ini1}
		\left\{
		\begin{array}{lr}
			\partial_tn+Ay\partial_x n+v\cdot\nabla n-\triangle n=-\nabla\cdot(n^2\nabla c)-\nabla(n\nabla c), \\
			\triangle c+n-c=0, \\
			\partial_tv+Ay\partial_x v+\left(
			\begin{array}{c}
				Av_2 \\
				0 \\
			\end{array}
			\right)
			+v\cdot\nabla v-\triangle v+\nabla P=\left(
			\begin{array}{c}
				0 \\
				n \\
			\end{array}
			\right), \\
			\nabla \cdot v=0.
		\end{array}
		\right.
	\end{equation}
	To deal with the pressure $ P, $ we introduce the vorticity $ \omega $ and the stream function $ \Phi $ as follows
	$$  \omega:=\partial_{y}v_{1}-\partial_{x}v_{2},\quad v=\nabla^{\bot}\Phi=(\partial_{y}\Phi, -\partial_{x}\Phi).  $$
	Then $\omega$ satisfies
	\begin{equation*}
		\partial_{t}\omega+Ay\partial_{x}\omega-\triangle\omega+v\cdot\nabla\omega=-\partial_{x} n.
	\end{equation*}
	
	After the time re-scaling $t\mapsto\frac{t}{A}$, we get
	\begin{equation}\label{ini2}
		\left\{
		\begin{array}{lr}
			\partial_tn+y\partial_x n-\frac{1}{A}\triangle n=-\frac{1}{A}\nabla\cdot(v n)-\frac{1}{A}\nabla\cdot(n^2\nabla c)-\frac{1}{A}\nabla\cdot(n\nabla c), \\
			\triangle c+n-c=0, \\
			\partial_t\omega+y\partial_x\omega-\frac{1}{A}\triangle\omega=
			-\frac{1}{A}(\partial_{x}n+v\cdot\nabla\omega), \\
			\nabla \cdot v=0.	\\
		\end{array}
		\right.
	\end{equation}

	Our main result  is stated as follows.
	\begin{theorem}\label{result}
		Assume that the initial data $n_{\rm in}\in L^{1}\cap L^{\infty}(\mathbb{T}\times\mathbb{R})$, $v_{\rm in}\in H^1(\mathbb{T}\times\mathbb{R})$ and the initial cell mass $ M=\|n_{\rm in}\|_{L^{1}} $ satisfies $ C_{*}^{4}M^{2}<3 $ , where $ C_{*} $ is the sharp Sobolev constant of 
		\begin{equation}\label{sharp}
			\|f\|_{L^4(\mathbb{R})}\leq C_{*} \|f\|_{L^1(\mathbb{R})}^{\frac12}\|\partial_{y} f\|_{L^2(\mathbb{R})}^{\frac12}.	
		\end{equation}
		Then there exists a positive constant $A_{1}$ depending on $||n_{\rm in}||_{L^{1}\cap L^{\infty}(\mathbb{T}\times\mathbb{R})}$
		and $||v_{\rm in}||_{H^1(\mathbb{T}\times\mathbb{R})}$, such that if $A\geq A_{1}$,	the solution of (\ref{ini2}) is global in time.
	\end{theorem}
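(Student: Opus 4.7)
\emph{Overall strategy.} I would decompose every scalar field $f\in\{n,c,\omega\}$ and the velocity $v$ into its $x$-average $f_0(y)$ and its fluctuation $f_{\neq}(x,y)$, and run a bootstrap argument on energy norms of each piece. Two mechanisms drive the proof. First, the zero mode of $n$ satisfies a one-dimensional parabolic equation whose leading self-interaction is critical because of the supercritical nonlinearity $-\nabla\cdot(n^{2}\nabla c)$; I will extract an effective positive dissipation for $n_{0}$ using the sharp Sobolev inequality (\ref{sharp}) together with the mass threshold $C_{*}^{4}M^{2}<3$. Second, after the rescaling $t\mapsto t/A$ already performed in (\ref{ini2}), every non-zero-in-$x$ mode of the drift-diffusion operator $\p_{t}+y\p_{x}-\tfrac{1}{A}\triangle$ enjoys enhanced dissipation at rate $A^{-1/3}$, supplying the small parameter needed to absorb all remaining nonlinear interactions once $A$ is sufficiently large.

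\emph{Zero-mode estimate (where the threshold arises).} Projecting $(\ref{ini2})_{1}$ onto the $x$-average gives $\p_{t}n_{0}-\tfrac{1}{A}\p_{y}^{2}n_{0}=-\tfrac{1}{A}\p_{y}\bigl((v_{2}n)_{0}+(n^{2}\p_{y}c)_{0}+(n\p_{y}c)_{0}\bigr)$. Testing against $n_{0}$, integrating by parts, and using the zero-mode identity $\p_{y}^{2}c_{0}=c_{0}-n_{0}$, the pure self-interaction of $(n^{2}\p_{y}c)_{0}$ contributes
\[
\tfrac{1}{A}\int_{\R}\p_{y}n_{0}\cdot n_{0}^{2}\,\p_{y}c_{0}\,dy=\tfrac{1}{3A}\|n_{0}\|_{L^{4}}^{4}-\tfrac{1}{3A}\int_{\R}n_{0}^{3}c_{0}\,dy.
\]
Bounding $\|n_{0}\|_{L^{4}}^{4}\le C_{*}^{4}\|n_{0}\|_{L^{1}}^{2}\|\p_{y}n_{0}\|_{L^{2}}^{2}\le C_{*}^{4}M^{2}\|\p_{y}n_{0}\|_{L^{2}}^{2}$ via (\ref{sharp}), and treating the subcritical pieces $n_{0}^{3}c_{0}$ and $n_{0}^{3}$ by interpolation, one obtains
\[
\tfrac{d}{dt}\|n_{0}\|_{L^{2}}^{2}+\tfrac{2\delta_{0}}{A}\|\p_{y}n_{0}\|_{L^{2}}^{2}\le(\text{cross terms with a $\neq$-factor}),\quad \delta_{0}:=1-\tfrac{1}{3}C_{*}^{4}M^{2}>0.
\]
The hypothesis $C_{*}^{4}M^{2}<3$ is exactly what makes $\delta_{0}$ strictly positive.

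\emph{Non-zero modes, fluid coupling, and bootstrap.} For $n_{\neq}$ and $\omega_{\neq}$ I would introduce time-weighted energy norms carrying an enhancement multiplier $e^{cA^{-1/3}t}$ (or, equivalently, resolvent/ghost-multiplier bounds in the spirit of Bedrossian--Masmoudi and of Zeng--Zhang--Zi) to encode the $A^{-1/3}$ enhanced dissipation. The stream function yields $\|v_{\neq}\|_{L^{2}}\lesssim\|\omega_{\neq}\|_{L^{2}}$, while $v_{0}$ obeys a straightforward viscous energy inequality fed by a quadratic source. The bootstrap hypothesis controls all these norms together with an $L^{\infty}$ bound on $n$ obtained by an Alikakos--Moser iteration applied to $(\ref{ini2})_{1}$. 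Every trilinear cross term arising from $\nabla\cdot(n^{2}\nabla c)$, $\nabla\cdot(n\nabla c)$, $v\cdot\nabla n$ or $v\cdot\nabla\omega$ contains at least one $\neq$-factor and therefore acquires a negative power of $A$ through the enhanced-dissipation norm of that factor; combined with the strictly positive $\delta_{0}$ of the zero-mode estimate and the viscous dissipation of $v$, choosing $A\ge A_{1}$ closes the bootstrap and yields the global-in-time bound.

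\emph{Main obstacle.} The genuinely new difficulty, absent from the subcritical case $\psi(n)=n$ treated by Zeng--Zhang--Zi, is the extra power of $n$ in $\nabla\cdot(n^{2}\nabla c)$. It bites in two places: (i) the pure zero-mode interaction saturates the sharp Sobolev inequality, so the constant $\delta_{0}$ has no slack, and the mixed terms $(n_{0}n_{\neq}\p_{y}c_{\neq})_{0}$ and $(n_{\neq}^{2}\p_{y}c_{0})_{0}$ must be estimated carefully enough that their prefactors never absorb $\delta_{0}$; this forces the enhanced-dissipation smallness to be used in a genuinely trilinear manner. (ii) Propagating an $L^{\infty}$ (or high-$L^{p}$) bound on $n$ in the presence of the $n^{2}$ flux, without contradicting the critical-mass balance above, requires running Alikakos--Moser iteration in parallel with the bootstrap rather than as a priori input. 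Managing these two issues simultaneously is the quantitative heart of the argument.
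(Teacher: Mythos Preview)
Your proposal is correct and follows essentially the same route as the paper: the authors run the identical bootstrap on the time-weighted $X_{a}$-norm $\|\omega_{\neq}\|_{X_{a}}+\|n_{\neq}\|_{X_{a}}$ (with multiplier $e^{aA^{-1/3}t}$) together with a postulated $L^{\infty}$ bound on $n$, close the zero-mode $L^{2}$ estimate by the very computation you wrote (their $\tau=1-\tfrac{1}{3}C_{*}^{4}M^{2}$ is your $\delta_{0}$), and then recover $\|n\|_{L^{\infty}L^{\infty}}$ inside the bootstrap via an Alikakos--Moser iteration along $p=2^{j}$. Your diagnosis of the main obstacle---that the $n^{2}$ flux both saturates the sharp Sobolev inequality in the zero-mode balance and forces the Moser iteration to be run in parallel with, not prior to, the enhanced-dissipation bootstrap---is exactly how the paper organizes Propositions~2.1 and~2.2.
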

	\begin{remark}\label{interpolation} For  $ \phi(n)=1 $ and $ \psi(n)=n(n+1)^{\alpha} $ with a critical Fujita exponent $ \alpha>0 $ in the system (\ref{ini}),   
as shown by Winkler \cite{winkler0} or \cite{CS2}, for any given mass, there exists a corresponding solution of the same mass
that blows up in either finite or infinite time. Theorem \ref{result} shows that the Couette flow (sufficiently large) can suppress the blow-up of a positive initial mass for $\alpha=1$ and the constant $C_{*}$ comes from the estimate of $ \|n_{0}\|_{L^{\infty}L^{2}} $ (see \eqref{n32}), which seems to be exactly necessary.  Hence, it is still unknown whether the Couette flow  can suppress the blow-up  for the case of $\alpha>1$.

		On the other hand, one can compute the detailed value of the above possible critical initial mass. For the following Gagliardo-Nirenberg interpolation inequality
		\begin{equation*}
			\left\{
			\begin{array}{lr}
				\|f\|_{L^{m+1}(\mathbb{R}^{n})}\leq C_{q,m,p}\|f\|_{L^{q+1}(\mathbb{R}^{n})}^{1-\theta}\|\nabla f\|_{L^{p}(\mathbb{R}^{n})}^{\theta}, \\
				\theta=\frac{pn(m-q)}{(m+1)[n(p-q-1)+p(q+1)]},	\\
			\end{array}
			\right.
		\end{equation*}
		Nagy \cite{nagy 1941} (see also \cite{1993,liu2017}) obtained the best constant $ C_{q,m,p} $ for the one-dimensional case in 1941. In particular, $ C_{0,3,2}=\left(\frac{4\pi^{2}}{9} \right)^{-\frac14} $ for $ p=2, q=0, m=3 $ and $ n=1, $ which corresponds to (\ref{sharp}). Thus, we can give a specific upper bound for the initial cell mass $ M$ as  follows.	
	\end{remark}
	
	\begin{theorem}\label{result 2}
		Assume that the initial data $n_{\rm in}\in L^{1}\cap L^{\infty}(\mathbb{T}\times\mathbb{R})$, $v_{\rm in}\in H^1(\mathbb{T}\times\mathbb{R})$ and the initial cell mass $ M=\|n_{\rm in}\|_{L^{1}}<\frac{2\pi}{\sqrt{3}} $.
		Then there exists a positive constant $A_{1}$ depending on $||n_{\rm in}||_{L^{1}\cap L^{\infty}(\mathbb{T}\times\mathbb{R})}$
		and $||v_{\rm in}||_{H^1(\mathbb{T}\times\mathbb{R})}$, such that if $A\geq A_{1}$,	the solution of (\ref{ini2}) is global in time.
	\end{theorem}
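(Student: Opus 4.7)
The plan is to derive Theorem \ref{result 2} as an immediate corollary of Theorem \ref{result} by inserting the explicit value of the sharp Sobolev constant $C_{*}$ appearing in \eqref{sharp} into the hypothesis $C_{*}^{4}M^{2}<3$. All other assumptions on the initial data $(n_{\rm in},v_{\rm in})$ are identical between the two statements, so the entire content to be supplied is the numerical evaluation of $C_{*}$.

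First, I would match the one-dimensional inequality \eqref{sharp} to the Gagliardo--Nirenberg interpolation inequality recalled in Remark \ref{interpolation}. Taking $n=1$, $p=2$, $q=0$ and $m=3$, one computes $\theta=1/2$, and the inequality reduces exactly to
\[
\|f\|_{L^{4}(\mathbb{R})}\leq C_{0,3,2}\,\|f\|_{L^{1}(\mathbb{R})}^{1/2}\|\partial_{y}f\|_{L^{2}(\mathbb{R})}^{1/2},
\]
so that $C_{*}=C_{0,3,2}$. Invoking Nagy's 1941 sharp-constant result cited in Remark \ref{interpolation}, we have $C_{0,3,2}=\bigl(4\pi^{2}/9\bigr)^{-1/4}$, and hence $C_{*}^{4}=9/(4\pi^{2})$.

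Substituting this into the hypothesis of Theorem \ref{result}, the condition $C_{*}^{4}M^{2}<3$ becomes $\frac{9M^{2}}{4\pi^{2}}<3$, which is equivalent to $M<\frac{2\pi}{\sqrt{3}}$. This is precisely the mass assumption in Theorem \ref{result 2}. Applying Theorem \ref{result} with $A\geq A_{1}$ then yields a global-in-time solution of \eqref{ini2}, completing the proof. There is no genuine analytical obstacle here: the nontrivial input is the sharpness of the Nagy constant, which is classical and already acknowledged in Remark \ref{interpolation}; the only step requiring mild care is verifying that the exponents and norms in Nagy's formulation match those of \eqref{sharp}, which the choice $(n,p,q,m)=(1,2,0,3)$ confirms.
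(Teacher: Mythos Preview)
Your proposal is correct and follows essentially the same approach as the paper: identify $C_{*}$ with Nagy's sharp constant $C_{0,3,2}=(4\pi^{2}/9)^{-1/4}$ via Remark~\ref{interpolation}, substitute into $C_{*}^{4}M^{2}<3$ to obtain $M<2\pi/\sqrt{3}$, and then invoke Theorem~\ref{result}. The paper's own proof is slightly terser but has the same content.
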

	
\begin{remark}
 Theorem \ref{result 2} shows that the Couette flow (sufficiently large) can suppress the blow-up of a positive initial mass, whose upper bound is greater than $\frac{2\pi}{\sqrt{3}}.$ It's interesting whether the initial mass is sharp. 
\end{remark}
	
	Finally, we will list some notations used in this paper.
	
	\noindent\textbf{Notations}:
	\begin{itemize}
		
		\item Define the Fourier transform by
		\begin{equation}
			f(t,x,y)=\sum_{k\in\mathbb{Z}}\hat{f}(t,y){\rm e}^{ikx}, \nonumber
		\end{equation}
		where $\hat{f}(t,y)=\frac{1}{|\mathbb{T}|}\int_{\mathbb{T}}{f}(t,x,y){\rm e}^{-ikx}dx.$
		
		\item For a given function $f=f(t,x,y)$,   write its $x$-part zero mode and  $x$-part non-zero mode by
		$$P_0f=f_0=\frac{1}{|\mathbb{T}|}\int_{\mathbb{T}}f(t,x,y)dx,\ {\rm and}\ P_{\neq}f=f_{\neq}=f-f_0.$$
		Especially, we use $v_{k,0}$ and $v_{k,\neq}$ to represent the zero mode
		and non-zero mode of the velocity $v_k(k=1,2)$, respectively.
		Similarly,  $\omega_{k,0}$ and $\omega_{k,\neq}$ represent the zero mode
		and non-zero mode of the vorticity $\omega_k(k=1,2)$.
		
		\item The norm of the $L^p$ space and the time-space norm  $\|f\|_{L^qL^p}$ are defined as
		\begin{equation*}
			\|f\|_{L^p(\mathbb{T}\times\mathbb{R})}=\left(\int_{\mathbb{T}\times\mathbb{R}}|f|^p dxdy\right)^{\frac{1}{p}},\quad \|f\|_{L^qL^p}=\big\|  \|f\|_{L^p(\mathbb{T}\times\mathbb{R})}\ \big\|_{L^q(0,t)}.
		\end{equation*}
		
		\item For $ a\geq 0, $ define the norm $\|f\|_{X_{a}}$ by
		\begin{equation*}
			\begin{aligned}
				\|f\|_{X_a}^{2}=&\| e^{aA^{-\frac{1}{3}}t}f\|_{L^\infty L^2}^2+\| e^{aA^{-\frac{1}{3}}t}\nabla^\bot\triangle^{-1}\partial_xf\|_{L^2 L^2}^2 \\
				& +\frac{1}{A^{\frac{1}{3}}}\| e^{aA^{-\frac{1}{3}}t}f\|_{L^2 L^2}^2+\frac{1}{A}\|e^{aA^{-\frac{1}{3}}t}\nabla f\|_{L^2 L^2}^2.
			\end{aligned}
		\end{equation*}	
		
		\item Denote by $ M $ the total mass $ \|n(t)\|_{L^{1}}. $ Clearly, integration by parts and divergence theorem yield that $$  M:=\|n(t)\|_{L^{1}}=\|n_{\rm in}\|_{L^{1}}.$$
		
		\item Throughout this paper,  denote by $ C $ a positive constant independent of $A$, $t$ and the initial data, and it may be different from line to line.
	\end{itemize}
	
	The rest part of this paper is organized as follows. In Section \ref{sec 2}, the key idea and the proof of \textbf{Theorem \ref{result}} are presented. Section \ref{sec 3} is devoted to providing a collection of elementary
	lemmas including space-time estimates, which are essential for the proof of \textbf{Proposition \ref{prop 1}}-\textbf{\ref{prop 2}}.
	In Section \ref{sec 4}, we finish the proof of \textbf{Proposition \ref{prop 1}}, where the smallness of the initial mass is necessary.
	The proof of \textbf{Proposition \ref{prop 2}} is established in Section \ref{sec 6}. In Section \ref{theorem}, we complete the proof of {\textbf{Theorem \ref{result 2}}}.
	
	\section{Key ideas and proof of Theorem \ref{result} }\label{sec 2} 
	For given functions $ f $ and $ g, $ there hold
	\begin{equation*}
		\begin{aligned}
			\left(fg\right)_{0}=f_{0}g_{0}+(f_{\neq}g_{\neq})_{0},\quad
			(fg)_{\neq}=f_0g_{\neq}+f_{\neq}g_0+(f_{\neq}g_{\neq})_{\neq}.
		\end{aligned}
	\end{equation*}
	As the enhanced dissipation of fluid only affects the non-zero mode, and it is essential to separate the zero mode and the non-zero mode of system (\ref{ini2}) as follows:
	\begin{equation}\label{eq:zero mode}
		\left\{\begin{array}{l}
			\partial_t n_0-\frac{1}{A}\triangle n_0=-\frac{1}{A}\nabla\cdot(v_{\neq} n_{\neq})_0-\frac{1}{A}\left(\partial_{y}(n_{0}^{2}\partial_{y}c_{0})+\partial_{y}((n_{\neq}^{2})_{0}\partial_{y}c_{0}) \right)
			\\
			\qquad\qquad\qquad\qquad\qquad-\frac{2}{A}\nabla\cdot(n_{\neq}n_{0}\nabla c_{\neq})_{0}-\frac{1}{A}\nabla\cdot((n_{\neq}^{2})_{\neq}\nabla c_{\neq})_{0} \\\qquad\qquad\qquad\qquad\qquad-\frac{1}{A}\partial_{y}(n_{0}\partial_{y}c_{0})-\frac{1}{A}\nabla\cdot(n_{\neq}\nabla c_{\neq})_{0}, \\
			-\triangle c_0+c_0=n_0 , \\
			\partial_t\omega_0-\frac{1}{A}\triangle\omega_0=-\frac{1}{A}\nabla\cdot(v_{\neq}\omega_{\neq})_0, \\
			v_0=(\partial_y(\partial_{yy})^{-1}\omega_0,0), \\ 
		\end{array}\right.
	\end{equation}
	and
	{\small
		\begin{equation}\label{eq:non-zero mode}
			\left\{\begin{array}{l}
				\partial_t n_{\neq}+y\partial_x n_{\neq}-\frac{1}{A}\triangle n_{\neq}=-\frac{1}{A}\left(\nabla\cdot(v_0n_{\neq})+\nabla\cdot(v_{\neq}n_0)+\nabla\cdot(v_{\neq}n_{\neq})_{\neq}\right)      \\
				\qquad\qquad\qquad\quad-\frac{2}{A}\partial_{y}(n_{\neq}n_{0}\partial_{y}c_{0})-\frac{1}{A}\left(\partial_{y}((n_{\neq}^{2})_{\neq}\partial_{y}c_{0})+\nabla\cdot(n_{0}^{2}\nabla c_{\neq}) \right)
				\\
				\qquad\qquad\qquad\quad
				-\frac{1}{A}\left(\nabla\cdot((n_{\neq}^{2})_{0}\nabla c_{\neq})+\nabla\cdot((n_{\neq}^{2})_{\neq}\nabla c_{\neq})_{\neq}\right)-\frac{2}{A}\nabla\cdot(n_{\neq}n_{0}\nabla c_{\neq})_{\neq}\\
				\qquad\qquad\qquad\quad-\frac{1}{A}\left(\nabla\cdot(n_{0}\nabla c_{\neq})+\partial_{y}(n_{\neq}\partial_{y}c_{0})+\nabla\cdot(n_{\neq}\nabla c_{\neq})_{\neq} \right) ,                                                                \\
				-\triangle c_{\neq}+c_{\neq}=n_{\neq},
				\\
				\partial_t\omega_{\neq}+y\partial_x\omega_{\neq}-\frac{1}{A}\triangle\omega_{\neq}=-\frac{1}{A}\partial_xn_{\neq}-\frac{1}{A}\left(\nabla\cdot(v_0\omega_{\neq})+\nabla\cdot(v_{\neq}\omega_0)+\nabla\cdot(v_{\neq}\omega_{\neq})_{\neq}\right),                               \\
				v_{\neq}=\nabla^\bot\triangle^{-1}\omega_{\neq}.
			\end{array}\right.
		\end{equation}
	}
	\begin{remark}\label{remark 1}
		Note that zero mode and non-zero mode can be controlled by their own functions. That is, for any $ 1\leq p\leq \infty, $ there hold
		\begin{equation*}
			\begin{aligned}
				\|f_{0}\|_{L^{p}(\mathbb{T}\times\mathbb{R})}\leq\|f\|_{L^{p}(\mathbb{T}\times\mathbb{R})} ,
			\end{aligned}
		\end{equation*}
		and
		\begin{equation*}
			\|f_{\neq}\|_{L^{p}(\mathbb{T}\times\mathbb{R})}\leq\|f\|_{L^{p}(\mathbb{T}\times\mathbb{R})}+\|f_{0}\|_{L^{p}(\mathbb{T}\times\mathbb{R})}\leq 2\|f\|_{L^{p}(\mathbb{T}\times\mathbb{R})}.
		\end{equation*}	
	\end{remark}	
	
	\begin{remark}
		Due to $ {\rm div}v_{0}=0, $ we have $ \partial_{y}v_{2,0}=0 $ and $ v_{2,0}=0. $ Then the zero mode of $ v_{1} $ satisfies
		\begin{equation}\label{v0}
			\partial_{t}v_{1,0}-\frac{1}{A}\partial_{yy}v_{1,0}=-\frac{1}{A}\partial_{y}(v_{2,\neq}v_{1,\neq})_{0}.
		\end{equation}
	\end{remark}
	
	We introduce an energy functional:
	\begin{equation*}
		E(t):=\|\omega_{\neq}\|_{X_{a}}+\|n_{\neq}\|_{X_{a}},
	\end{equation*}
	with the initial norm
	\begin{equation*}
		E_{\rm in}:=\|\omega_{\rm in,\neq}\|_{L^{2}}+\|n_{\rm in,\neq}\|_{L^{2}}.
	\end{equation*}	
	Let  $T$ be the terminal point of the largest range $[0, T]$ such that the following hypothesis hold
	\begin{align}
		E(t)\leq 2K_{\neq}, \label{assumption_0}\\
		||n||_{L^{\infty}L^{\infty}}\leq 2K_{\infty},  \label{assumption_1}
	\end{align}
	for any $t\in[0, T]$, where $K_{\neq}$ and $K_{\infty}$ will be calculated during the calculation.

	The following propositions are key to obtaining the main results. Combining them with the local well-posedness of the system (\ref{ini2}), we can deduce the global existence of the solution.
	\begin{proposition}\label{prop 1}
		Assume that the initial date $n_{\rm in}\in L^1\cap L^{\infty}(\mathbb{T}\times\mathbb{R})$ and $v_{\rm in}\in H^1(\mathbb{T}\times\mathbb{R})$, under the conditions of (\ref{assumption_0}) and (\ref{assumption_1}), there exist a positive constant $ K_{\neq} $ depending on $ E_{\rm in}, $ and a positive constant $ A_{2}$ depending on $K_{\neq}, K_{\infty}, M $ and $\|v_{\rm in}\|_{H^{1}} $, such that if $ A\geq A_{2}, $ there holds
		\begin{equation*}
			\begin{aligned}
				E(t)&\leq K_{\neq},
			\end{aligned}
		\end{equation*}
		for all $ t\in[0,T]. $
	\end{proposition}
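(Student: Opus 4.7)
The plan is to derive the improved estimate $E(t)\leq K_{\neq}$ by performing weighted energy estimates on the non-zero mode system \eqref{eq:non-zero mode} with the time weight $e^{aA^{-\frac{1}{3}}t}$, closing a bootstrap argument under \eqref{assumption_0}-\eqref{assumption_1}. First I would test the $\omega_{\neq}$ and $n_{\neq}$ equations against $e^{2aA^{-\frac{1}{3}}t}\omega_{\neq}$ and $e^{2aA^{-\frac{1}{3}}t}n_{\neq}$ respectively. The linear transport--diffusion operator $\partial_t+y\partial_x-\frac{1}{A}\triangle$ together with a small enough $a$ in the weight yields, via the standard hypocoercivity computation for the $2$D Couette shear, a coercive lower bound controlling all four pieces of the $X_a$-norm at once: the $L^{\infty}L^2$ term, the enhanced-dissipation resolvent piece $\|\nabla^{\bot}\triangle^{-1}\partial_x f\|_{L^2L^2}^2$, the low-frequency $A^{-\frac{1}{3}}$ piece, and the full parabolic gain $\frac{1}{A}\|\nabla f\|_{L^2L^2}^2$. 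This reduces the proposition to absorbing the right-hand sides using these four reservoirs.

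Next, the linear coupling $-\frac{1}{A}\partial_x n_{\neq}$ in the vorticity equation is estimated by writing $\partial_x=\triangle\cdot\triangle^{-1}\partial_x$ and pairing $\nabla^{\bot}\triangle^{-1}\partial_x n_{\neq}$ against $\nabla\omega_{\neq}$; this contribution is bounded by $CA^{-\frac12}\|n_{\neq}\|_{X_a}\|\omega_{\neq}\|_{X_a}$, subcritical for large $A$. The advection pieces $\nabla\cdot(v_0 f_{\neq})$, $\nabla\cdot(v_{\neq}f_0)$, $\nabla\cdot(v_{\neq}f_{\neq})_{\neq}$ in both equations are handled by Hölder, Remark \ref{remark 1}, the identity $v_{\neq}=\nabla^{\bot}\triangle^{-1}\omega_{\neq}$, the mass invariance $\|n_0\|_{L^1}=M$, the bootstrap \eqref{assumption_1}, and a direct $L^2$ energy estimate on \eqref{v0} to control $v_{1,0}$ by $\|v_{\rm in}\|_{H^1}$ plus a quadratic correction. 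The chemotactic pieces such as $\nabla\cdot(n_{\neq}n_0\nabla c_{\neq})$, $\nabla\cdot((n_{\neq}^2)_{\neq}\nabla c_{\neq})$ and $\nabla\cdot(n_0\nabla c_{\neq})$ are then bounded using the elliptic gain $\|\nabla c_{\neq}\|_{L^2}+\|c_{\neq}\|_{L^2}\lesssim\|n_{\neq}\|_{L^2}$ coming from $-\triangle c_{\neq}+c_{\neq}=n_{\neq}$, the $L^{\infty}$ bootstrap \eqref{assumption_1}, and Gagliardo--Nirenberg; each such term carries an extra $A^{-1}$ and is paid for by one dissipation factor from $\|\cdot\|_{X_a}^{2}$.

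The main obstacle I expect is the family of terms in the $n_{\neq}$ equation that contain the zero mode quadratically, notably $\frac{1}{A}\nabla\cdot(n_0^2\nabla c_{\neq})$ and $\frac{1}{A}\partial_y(n_{\neq}n_0\partial_y c_0)$, since $n_0$ does not enjoy enhanced dissipation and its only decaying control is the parabolic one at rate $A^{-1}$ coming from \eqref{eq:zero mode}. To handle these I would invoke the sharp one-dimensional Sobolev inequality \eqref{sharp}, together with the mass bound $\|n_0\|_{L^1}=M$ and $\|n\|_{L^{\infty}L^{\infty}}\leq 2K_{\infty}$, to control $n_0$ in mixed norms of $L^4$ and $L^{\infty}_yL^{2}_x$ type, then pair with $\nabla c_{\neq}$ or $\nabla n_{\neq}$ in $L^{2}L^{2}$, pushing the product into the diffusion reservoir $\frac{1}{A}\|\nabla n_{\neq}\|_{L^2 L^2}^2$ of the $X_a$-norm. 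All such terms produce at worst $A^{-\sigma}$ for some $\sigma>0$ times polynomials in $K_{\neq},K_{\infty},M$ and $\|v_{\rm in}\|_{H^1}$. Finally, choosing $K_{\neq}\sim E_{\rm in}$ to absorb the linear contribution and then taking $A\geq A_{2}(K_{\neq},K_{\infty},M,\|v_{\rm in}\|_{H^1})$ large enough makes every nonlinear correction small, closing the bootstrap and yielding $E(t)\leq K_{\neq}$ on $[0,T]$.
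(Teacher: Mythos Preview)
Your overall strategy matches the paper's: bound $\|\omega_{\neq}\|_{X_a}$ and $\|n_{\neq}\|_{X_a}$ separately by combining the enhanced-dissipation estimate for the linear Couette operator with term-by-term control of the nonlinear forcing, then close via the bootstrap by taking $A$ large. The organizational difference is that the paper does not redo the hypocoercivity computation inline; it packages the linear analysis into the black-box space--time estimate Lemma~\ref{lem:space-time} (quoted from Zeng--Zhang--Zi), applied to $(\ref{eq:non-zero mode})_3$ with $f_1=-\tfrac{1}{A}n_{\neq}$ and to $(\ref{eq:non-zero mode})_1$ with all the right-hand side placed in the $f_3$ slot. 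This directly produces the four pieces of $\|\cdot\|_{X_a}$ on the left and a $\tfrac{C}{A^{1/2}}\|e^{aA^{-1/3}t}(\cdot)\|_{L^2L^2}$ cost for each divergence-form term on the right, which are then estimated one by one using Lemma~\ref{ellip_0}, Lemma~\ref{ellip_2}, Lemma~\ref{lem: non-zero} and the zero-mode bounds $H_3,H_4$ of Lemma~\ref{lem: zero mode}.

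One correction to your plan: the terms $\tfrac{1}{A}\nabla\cdot(n_0^2\nabla c_{\neq})$ and $\tfrac{1}{A}\partial_y(n_{\neq}n_0\partial_y c_0)$ that you single out as the ``main obstacle'' are in fact the easiest under the bootstrap \eqref{assumption_1}. Since $\|n_0\|_{L^\infty}\leq 2K_\infty$ and $\|\partial_y c_0\|_{L^\infty}\lesssim\|n_0\|_{L^2}\leq CM^{1/2}K_\infty^{1/2}$ by Lemma~\ref{ellip_0} and interpolation, these contribute at most $CA^{-1/3}K_\infty^2\|n_{\neq}\|_{X_a}$ with no further work. The sharp one-dimensional inequality \eqref{sharp} is \emph{not} used anywhere in the proof of Proposition~\ref{prop 1}; it enters only in the $\|n_0\|_{L^\infty L^2}$ estimate of Lemma~\ref{lem: zero mode}, which is needed for Proposition~\ref{prop 2}, not here. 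Accordingly, Proposition~\ref{prop 1} carries no mass-smallness requirement.
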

	
	\begin{proposition}\label{prop 2}
		Assume that the initial date $n_{\rm in}\in L^1\cap L^{\infty}(\mathbb{T}\times\mathbb{R})$, $v_{\rm in}\in H^1(\mathbb{T}\times\mathbb{R})$ and $ C_{*}^{4}M^{2}<3 $, under the conditions of (\ref{assumption_0}) and (\ref{assumption_1}), there exists a positive constant $ K_{\infty} $ depending on $ \|n_{\rm in}\|_{L^{1}\cap L^{\infty}}, $ such that 
		\begin{equation*}
			\begin{aligned}
				\|n\|_{L^{\infty}L^{\infty}}&\leq K_{\infty},
			\end{aligned}
		\end{equation*}
		for all $ t\in[0,T]. $
	\end{proposition}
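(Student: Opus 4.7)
The plan is to decompose $n=n_0+n_\neq$ and handle each mode separately. The non-zero mode already satisfies $\|n_\neq\|_{X_a}\leq 2K_\neq$ from the bootstrap \eqref{assumption_0}, and for $A$ large its contribution to $\|n\|_{L^\infty L^\infty}$ can be made small by parabolic/maximal regularity and 2D Sobolev embedding. The essential task is thus an $A$-independent bound for $\|n_0\|_{L^\infty L^\infty}$, achieved in two stages: first an $L^\infty L^2$ bound using the sharp Nagy inequality \eqref{sharp} and the mass smallness $C_*^4M^2<3$, then an $L^p$-iteration up to $p=\infty$.

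Stage 1 (the $L^2$ bound). Since $n_0$ depends only on $y$ and $c_0$ solves the 1D elliptic equation $-\partial_{yy}c_0+c_0=n_0$, testing the first equation of \eqref{eq:zero mode} against $n_0$ and integrating by parts in $y$ yields, for the dominant supercritical nonlinearity,
\[
-\frac{1}{A}\int \partial_y(n_0^2\partial_y c_0)\,n_0\,dy\;=\;\frac{1}{3A}\|n_0\|_{L^4(\mathbb{R})}^4-\frac{1}{3A}\int n_0^3 c_0\,dy\;\leq\;\frac{1}{3A}\|n_0\|_{L^4(\mathbb{R})}^4,
\]
using $c_0\geq 0$ from the maximum principle applied to the elliptic equation with $n_0\geq 0$. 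Applying \eqref{sharp} together with mass conservation gives a bound proportional to $\tfrac{C_*^4M^2}{3A}\|\partial_y n_0\|_{L^2}^2$, which is strictly dominated by the available diffusion $\tfrac{1}{A}\|\partial_y n_0\|_{L^2}^2$ precisely when $C_*^4M^2<3$. The companion cubic term from $-\tfrac{1}{A}\partial_y(n_0\partial_y c_0)$ contributes at most $\tfrac{1}{2A}\|n_0\|_{L^3}^3$, absorbed via Young and the conserved mass. All cross terms in \eqref{eq:zero mode} involving $v_\neq$, $n_\neq$ or $c_\neq$ carry the prefactor $A^{-1}$ and are estimated through the bootstrap $\|n_\neq\|_{X_a}\leq 2K_\neq$ combined with \eqref{assumption_1} and Remark \ref{remark 1}, producing an $O(A^{-1/3})$ perturbation that is absorbed for $A$ large. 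This yields $\|n_0\|_{L^\infty L^2}\leq C(M,\|n_{\rm in}\|_{L^2})$, uniformly in $A$.

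Stage 2 (upgrade to $L^\infty$). I would run a Moser/Alikakos iteration on the 1D equation for $n_0$. Testing against $n_0^{p-1}$, the worst term is $\tfrac{p(p-1)}{A(p+1)}\|n_0\|_{L^{p+2}(\mathbb{R})}^{p+2}$, and the scaling-consistent 1D Gagliardo-Nirenberg inequality $\|f\|_{L^{p+2}}^{p+2}\leq C\|f\|_{L^1}^{(p+2)/3}\|\partial_y f\|_{L^2}^{2(p-1)/3}$ combined with Young's inequality and the $L^2$ bound from Stage 1 closes the estimate uniformly in $p$, producing $\|n_0\|_{L^\infty L^p}\leq C_p$; the resulting recursion yields $\|n_0\|_{L^\infty L^\infty}\leq C(\|n_{\rm in}\|_{L^1\cap L^\infty})$ in the limit. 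Combining with the $O(A^{-\alpha})$ bound for $\|n_\neq\|_{L^\infty L^\infty}$ improves \eqref{assumption_1} to the required $K_\infty$.

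The principal obstacle is the $L^2$ estimate in Stage 1: since $C_*^4M^2<3$ is the sharp threshold at which the quartic chemotactic nonlinearity is absorbed by the 1D diffusion, there is no room to discard any positive quantity. Both the sign information $c_0\geq 0$ and the quantitative smallness in $A$ (coming from the enhanced dissipation encoded in \eqref{assumption_0}) of the cross-mode interactions must be used essentially; any looseness would force an admissible mass strictly smaller than $2\pi/\sqrt{3}$, defeating the point of Theorem \ref{result 2}.
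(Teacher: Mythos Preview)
Your Stage~1 is essentially what the paper does in Lemma~\ref{lem: zero mode} (your use of $c_0\geq0$ to drop $-\tfrac{1}{3A}\int n_0^3c_0$ is a minor simplification of the paper's $I_2$ estimate). The genuine gap is in your treatment of $n_\neq$. You assert that ``for $A$ large its contribution to $\|n\|_{L^\infty L^\infty}$ can be made small by parabolic/maximal regularity and 2D Sobolev embedding'' and later that $\|n_\neq\|_{L^\infty L^\infty}=O(A^{-\alpha})$. This is false: at $t=0$ one has $\|n_\neq(0)\|_{L^\infty}=\|n_{\rm in,\neq}\|_{L^\infty}$, which is a fixed quantity independent of $A$, so no smallness in $A$ is possible for the $L^\infty_tL^\infty_x$ norm. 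Moreover the bootstrap bound $\|n_\neq\|_{X_a}\leq 2K_\neq$ only controls $L^\infty_tL^2_x$ and time--integrated quantities; in two dimensions there is no Sobolev embedding from these into $L^\infty_x$, and the transport--diffusion equation for $n_\neq$ does not yield a pointwise bound without a Moser-type argument of its own.

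The paper resolves this by \emph{not} separating the $L^\infty$ iteration into zero and non-zero modes. After the $L^2$ and $L^4$ bounds on $n_0$ from Lemma~\ref{lem: zero mode}, it first proves $\|n_\neq\|_{L^\infty L^4}\leq C(\|n_{\rm in}\|_{L^4}+1)$ directly by testing $(\ref{eq:non-zero mode})_1$ with $4n_\neq^3$ (this bound is \emph{not} small in $A$), then combines to get $\|n\|_{L^\infty L^4}\leq D_4$, and finally runs the Moser iteration on the \emph{full} equation $(\ref{ini2})_1$ for $n$ in 2D, using the 2D Nash inequality $\|n^p\|_{L^2}^4\leq C\|n^p\|_{L^1}^2\|\nabla n^p\|_{L^2}^2$. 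Your alternative route of iterating only on the 1D equation for $n_0$ could in principle be made to work for $n_0$ (though your quoted inequality $\|f\|_{L^{p+2}}^{p+2}\leq C\|f\|_{L^1}^{(p+2)/3}\|\partial_y f\|_{L^2}^{2(p-1)/3}$ has incorrect exponents and must in any case be applied to $n_0^{p/2}$, not $n_0$, to match the diffusion $\|\partial_y n_0^{p/2}\|_{L^2}^2$), but you would still need a separate $L^\infty_tL^\infty_x$ bound on $n_\neq$, and that requires the same kind of 2D Moser argument the paper carries out.
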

	\begin{proof}[Proof of Theorem \ref{result}]
		Taking $ A_{1}=\max\{A_{2}, A_{3}\} $ and combining {\textbf{Proposition \ref{prop 1}}} and {\textbf{Proposition \ref{prop 2}}},  the proof is complete.	
	\end{proof}

	\section{A Priori estimates}\label{sec 3}
	
	\subsection{Space-time estimate}
	The following space-time estimate plays an important role to bound the non-zero modes of the solution to the system (\ref{ini2}).
	\begin{lemma}[See Proposition 3.1, \cite{zeng}]\label{lem:space-time}
		Let $ f $ satisfies
		\begin{equation}\label{eq linear}
			\partial_tf+y\partial_xf-\frac{1}{A}\triangle f= \partial_xf_1+f_2+{\rm div} f_3.
		\end{equation}
		If $P_0f=P_0f_1=P_0f_2=P_0f_3=0,$ then for $a\in[0,4],$ 
		we have
		\begin{equation*}
			\begin{aligned}
				\|f\|_{X_a}^2\leq C\left(\|f(0)\|_{L^2}^2+\|e^{aA^{-\frac{1}{3}}t}\nabla f_1\|_{L^2L^2}^2+A^{\frac{1}{3}}\|e^{aA^{-\frac{1}{3}}t} f_2\|_{L^2 L^2}^2+A\|e^{aA^{-\frac{1}{3}}t} f_3\|_{L^2 L^2}^2\right).
			\end{aligned}
		\end{equation*}
	\end{lemma}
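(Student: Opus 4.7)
Since the statement is a purely linear result about the passive-scalar equation on the Couette background, the plan is to reduce it to a family of one-dimensional problems indexed by the non-zero $x$-frequency $k$, and then extract the enhanced-dissipation rate $A^{-1/3}$ for each mode. Taking Fourier transform in $x$, each non-zero mode $\hat f_k$ satisfies
\begin{equation*}
\p_t \hat f_k + iky\hat f_k - \tfrac{1}{A}(\p_y^2 - k^2)\hat f_k = ik\hat f_{1,k} + \hat f_{2,k} + \p_y \hat f_{3,k}^{(2)} + ik\hat f_{3,k}^{(1)},
\end{equation*}
where $f_3 = (f_3^{(1)}, f_3^{(2)})$. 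The four pieces defining $\|f\|_{X_a}^2$ correspond respectively to the $L^\infty_t L^2$ coercivity, an auxiliary velocity-type norm of hypocoercive flavor, the enhanced-dissipation term at rate $A^{-1/3}$, and the standard parabolic dissipation at rate $A^{-1}$.

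The workhorse would be a hypocoercive energy of the form
\begin{equation*}
\mathcal{E}_k(t) = \|\hat f_k\|_{L^2_y}^2 + \alpha A^{-2/3}\|\p_y\hat f_k\|_{L^2_y}^2 - \beta A^{-1/3}\mathrm{Re}\langle \p_y\hat f_k, ik\hat f_k\rangle,
\end{equation*}
with small universal $\alpha,\beta>0$. Differentiating in $t$ and exploiting the commutator $[\p_y, iky]=ik$, one obtains a differential inequality of the shape $\frac{d}{dt}\mathcal{E}_k + cA^{-1/3}\mathcal{E}_k + \frac{1}{A}\|\p_y\hat f_k\|_{L^2_y}^2 + A^{-1/3}\|\hat f_k\|_{L^2_y}^2 \le \text{source}_k$. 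Multiplying by $e^{2aA^{-1/3}t}$, integrating in time and summing over $k\neq 0$ produces the four pieces of $\|f\|_{X_a}^2$; the restriction $a\in[0,4]$ comes from needing $c>a$ so that $aA^{-1/3}\mathcal{E}_k$ is absorbed into $cA^{-1/3}\mathcal{E}_k$. The auxiliary norm $\|\nabla^{\bot}\triangle^{-1}\p_x f\|_{L^2L^2}$ is, in Fourier, $\frac{|k|}{k^2+\eta^2}|\hat f_k(\eta)|$; a standard resolvent identity shows this quantity is controlled by the hypocoercive machinery, equivalently by tracking the stream function of $f$.

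For the source terms, each prefactor has a distinct origin. Testing $ik\hat f_{1,k}$ against $\hat f_k$ and integrating by parts once in $x$ shifts the derivative onto $f$, which is absorbed by the dissipation $\frac{1}{A}\|\nabla f\|_{L^2L^2}^2$, generating the cost $\|\nabla f_1\|_{L^2L^2}^2$ with no extra power of $A$. Testing $\hat f_{2,k}$ against $\hat f_k$ is paired with the enhanced-dissipation term $A^{-1/3}\|f\|_{L^2L^2}^2$ and by Cauchy--Schwarz produces the factor $A^{1/3}\|f_2\|_{L^2L^2}^2$. Finally $\mathrm{div}\,f_3$ contains a full derivative; integration by parts controls it against $\frac{1}{A}\|\nabla f\|_{L^2L^2}^2$ at the cost of $A\|f_3\|_{L^2L^2}^2$. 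All three are carried through the time-weighted form by the exponential factor $e^{aA^{-1/3}t}$ since $a$ is uniform in $k$.

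The main obstacle will be the simultaneous calibration of $\alpha$, $\beta$, and the various Cauchy--Schwarz splittings so that every piece closes consistently: the quadratic form $\mathcal{E}_k$ must be positive-definite uniformly in $k$ and $A$, the constant $c$ produced by the hypocoercive structure must strictly dominate $a\le 4$, and the source estimates must match exactly the coefficients $1$, $A^{1/3}$, $A$ in the statement. Beyond these delicate but ultimately bookkeeping-type choices of constants, no new idea is required, and the argument follows the proof in \cite{zeng}.
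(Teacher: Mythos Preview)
The paper does not prove this lemma at all: it is quoted verbatim as Proposition~3.1 of \cite{zeng} and used as a black box, so there is no ``paper's own proof'' to compare against. Your sketch is therefore not competing with anything in the present paper; it is an outline of how the result in \cite{zeng} is obtained.

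As a sketch of that argument, your plan is essentially the right one: Fourier in $x$, a hypocoercive functional of the type $\|\hat f_k\|^2 + \alpha A^{-2/3}\|\p_y\hat f_k\|^2 - \beta A^{-1/3}\mathrm{Re}\langle \p_y\hat f_k, ik\hat f_k\rangle$ to manufacture the $A^{-1/3}$ decay rate, and then the three source terms are paired respectively with the parabolic dissipation (for $\p_x f_1$ and $\mathrm{div}\,f_3$) and with the enhanced dissipation (for $f_2$), yielding exactly the prefactors $1$, $A^{1/3}$, $A$. Two small points are worth tightening. First, the Fourier symbol you wrote for $\nabla^{\bot}\triangle^{-1}\p_x f$ is off by a square root: the correct amplitude is $|k|(k^2+\eta^2)^{-1/2}|\hat f_k(\eta)|$, not $|k|(k^2+\eta^2)^{-1}|\hat f_k(\eta)|$. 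Second, the term $\|e^{aA^{-1/3}t}\nabla^{\bot}\triangle^{-1}\p_x f\|_{L^2L^2}^2$ does not drop out of the hypocoercive functional by itself; in \cite{zeng} it is obtained by a separate energy identity on the stream function $\varphi = \triangle^{-1}f$ (test the equation against $-\p_x^2\varphi$), which produces precisely this quantity on the left and terms already controlled on the right. Your phrase ``a standard resolvent identity'' gestures at this but would need to be made explicit. With those two corrections, the outline is sound.
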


	\subsection{Elliptic estimates}
	We estimate $c$ by elliptic energy method, which is similar as in \cite{cui1} or \cite{zeng}.
	\begin{lemma}\label{ellip_0}
		Let $c_0$ and $n_{0}$ be the zero mode of $c$ and $n$, respectively, satisfying
		$$-\partial_{yy} c_0+c_0=n_{0}.$$
		Then there hold
		\begin{align}
			\|\partial_{y}^{2} c_0(t)\|_{L^2}+\|\partial_{y} c_0(t)\|_{L^2}+\|c_0\|_{L^2}
			\leq C\|n_{0}(t)\|_{L^2},\nonumber 
		\end{align}
		$$\|\partial_{y} c_0(t)\|_{L^4}\leq C\|n_{0}(t)\|_{L^2},$$
		and
		$$ \|c_{0}(t)\|_{L^{\infty}}\leq C\|n_{0}(t)\|_{L^{2}},\quad\|\partial_{y}c_{0}(t)\|_{L^{\infty}}\leq C\|n_{0}(t)\|_{L^{2}}, $$
		for any $t\geq0$.
	\end{lemma}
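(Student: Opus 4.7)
The plan is to exploit the fact that $c_0=c_0(y)$ depends only on $y$, so the equation reduces to the one-dimensional ODE $-\partial_{yy}c_0+c_0=n_0$ on $\mathbb{R}$, and everything reduces to standard one-dimensional energy identities, algebraic manipulation of the equation, and Gagliardo--Nirenberg interpolation.

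First, I would establish the basic $H^1$ bound. Multiplying the ODE by $c_0$ and integrating over $\mathbb{R}$, integration by parts gives
\begin{equation*}
\|\partial_y c_0\|_{L^2}^2+\|c_0\|_{L^2}^2=\int_{\mathbb{R}} n_0\,c_0\,dy\leq\|n_0\|_{L^2}\|c_0\|_{L^2},
\end{equation*}
which immediately yields $\|c_0\|_{L^2}+\|\partial_y c_0\|_{L^2}\leq C\|n_0\|_{L^2}$. The $\dot H^2$ estimate is then free from the equation itself: $\partial_{yy}c_0=c_0-n_0$, so
\begin{equation*}
\|\partial_{yy}c_0\|_{L^2}\leq\|c_0\|_{L^2}+\|n_0\|_{L^2}\leq C\|n_0\|_{L^2}.
\end{equation*}

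Next, I would handle the $L^\infty$ bounds by the one-dimensional Sobolev embedding $H^1(\mathbb{R})\hookrightarrow L^\infty(\mathbb{R})$, applied respectively to $c_0$ and to $\partial_y c_0$. Concretely, $\|f\|_{L^\infty(\mathbb{R})}^2\leq 2\|f\|_{L^2}\|\partial_y f\|_{L^2}$ gives
\begin{equation*}
\|c_0\|_{L^\infty}\leq C\|c_0\|_{L^2}^{\frac12}\|\partial_y c_0\|_{L^2}^{\frac12}\leq C\|n_0\|_{L^2},
\end{equation*}
and similarly $\|\partial_y c_0\|_{L^\infty}\leq C\|\partial_y c_0\|_{L^2}^{\frac12}\|\partial_{yy}c_0\|_{L^2}^{\frac12}\leq C\|n_0\|_{L^2}$ using the two bounds already established.

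Finally, for the $L^4$ estimate on $\partial_y c_0$ I would invoke the one-dimensional Gagliardo--Nirenberg interpolation
\begin{equation*}
\|f\|_{L^4(\mathbb{R})}\leq C\|f\|_{L^2(\mathbb{R})}^{\frac34}\|\partial_y f\|_{L^2(\mathbb{R})}^{\frac14},
\end{equation*}
applied to $f=\partial_y c_0$; combined with the $L^2$ bounds on $\partial_y c_0$ and $\partial_{yy}c_0$ already derived, this produces $\|\partial_y c_0\|_{L^4}\leq C\|n_0\|_{L^2}$. I do not anticipate any real obstacle here: the statement is a standard elliptic regularity / Sobolev embedding bundle in one dimension, and the only mildly delicate point is to remember to use the $\dot H^2$ bound (rather than just $\dot H^1$) when interpolating to get $L^4$ of the derivative and $L^\infty$ of the derivative.
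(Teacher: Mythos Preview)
Your proposal is correct and matches the paper's proof essentially line for line: the paper also obtains the $H^2$ bound by ``basic energy estimates'' (your multiplication by $c_0$ plus the algebraic $\partial_{yy}c_0=c_0-n_0$) and then applies the identical one-dimensional Gagliardo--Nirenberg inequalities $\|\partial_y c_0\|_{L^4}\leq C\|\partial_y c_0\|_{L^2}^{3/4}\|\partial_{yy}c_0\|_{L^2}^{1/4}$ and $\|f\|_{L^\infty}\leq C\|f\|_{L^2}^{1/2}\|\partial_y f\|_{L^2}^{1/2}$ for $f=c_0,\partial_y c_0$. If anything, your write-up is slightly more explicit than the paper's on the energy step.
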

	\begin{proof}
		The basic energy estimates yield
		\begin{equation}
			\begin{aligned}
				\|\partial_{y}^{2} c_0(t)\|^2_{L^2}+\|\partial_{y} c_0(t)\|^2_{L^2}+\|c_0(t)\|^2_{L^2}
				\leq C\|n_{0}(t)\|^2_{L^2}.
				\nonumber
			\end{aligned}
		\end{equation}
		Furthermore, using the Gagliardo-Nirenberg inequality, we have	
		$$\|\partial_{y}c_{0}(t)\|_{L^{4}}\leq C\|\partial_{y}c_{0}(t)\|_{L^{2}}^{\frac34}\|\partial_{y}^{2}c_{0}(t)\|_{L^{2}}^{\frac14}\leq C\|n_{0}(t)\|_{L^{2}}, $$
		$$\|c_{0}(t)\|_{L^{\infty}}\leq C\|c_{0}(t)\|_{L^{2}}^{\frac12}\|\partial_{y}c_{0}(t)\|_{L^{2}}^{\frac12} \leq C\|n_{0}(t)\|_{L^{2}},$$
		and
		$$ \|\partial_{y}c_{0}(t)\|_{L^{\infty}}\leq C\|\partial_{y}c_{0}(t)\|_{L^{2}}^{\frac12}\|\partial_{y}^{2}c_{0}(t)\|_{L^{2}}^{\frac12}\leq C\|n_{0}(t)\|_{L^{2}}. $$
	\end{proof}

	\begin{lemma}\label{ellip_2}
		Let $c_{\neq}$ and $n_{\neq}$ be the non-zero mode of $c$ and $n$,
		respectively, satisfying
		$$-\triangle c_{\neq}+c_{\neq}=n_{\neq}.$$
		Then there hold
		\begin{align}
			\|\triangle c_{\neq}(t&)\|_{L^2}
			+\|\nabla c_{\neq}(t)\|_{L^2}\leq C\|n_{\neq}(t)\|_{L^2},\nonumber
		\end{align}
		and
		\begin{equation}
			\|\nabla c_{\neq}(t)\|_{L^4}\leq C\|n_{\neq}(t)\|_{L^2},\nonumber
		\end{equation}
		for any $t\geq0$.
	\end{lemma}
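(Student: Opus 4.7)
The plan is a two-step elliptic energy argument followed by a Gagliardo--Nirenberg interpolation. First, I test the identity $-\triangle c_{\neq}+c_{\neq}=n_{\neq}$ against $c_{\neq}$ itself and integrate over $\mathbb{T}\times\mathbb{R}$; periodicity in $x$ and decay in $y$ kill the boundary terms from integration by parts, so
\begin{equation*}
\|\nabla c_{\neq}\|_{L^2}^2+\|c_{\neq}\|_{L^2}^2=\int_{\mathbb{T}\times\mathbb{R}} n_{\neq}\,c_{\neq}\,dxdy\leq \|n_{\neq}\|_{L^2}\|c_{\neq}\|_{L^2},
\end{equation*}
from which $\|c_{\neq}\|_{L^2}+\|\nabla c_{\neq}\|_{L^2}\leq C\|n_{\neq}\|_{L^2}$. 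Next I test against $-\triangle c_{\neq}$ and integrate, obtaining $\|\triangle c_{\neq}\|_{L^2}^2+\|\nabla c_{\neq}\|_{L^2}^2=-\int n_{\neq}\triangle c_{\neq}$, and Cauchy--Schwarz yields $\|\triangle c_{\neq}\|_{L^2}\leq \|n_{\neq}\|_{L^2}$. This disposes of the first estimate.

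For the $L^4$ bound, I will combine the above $L^2$ bounds with a Gagliardo--Nirenberg inequality on $\mathbb{T}\times\mathbb{R}$ applied to each component of $\nabla c_{\neq}$, namely $\|\partial_i c_{\neq}\|_{L^4}^2\leq C\|\partial_i c_{\neq}\|_{L^2}\|\nabla\partial_i c_{\neq}\|_{L^2}$, valid in the 2D mixed domain by the standard argument (extending in $x$ via periodicity reduces to the Euclidean case, and no zero-mean condition in $y$ is needed since we are controlling derivatives). It remains to bound full second derivatives by the Laplacian: via Plancherel on $\mathbb{T}\times\mathbb{R}$ one has $\|\partial_{ij}c_{\neq}\|_{L^2}\leq \|\triangle c_{\neq}\|_{L^2}$, so $\|\nabla^2 c_{\neq}\|_{L^2}\leq C\|\triangle c_{\neq}\|_{L^2}\leq C\|n_{\neq}\|_{L^2}$. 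Substituting into Gagliardo--Nirenberg and summing over $i$ produces $\|\nabla c_{\neq}\|_{L^4}\leq C\|n_{\neq}\|_{L^2}$.

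No serious obstacle is anticipated: the proof is entirely standard elliptic regularity, mirroring Lemma~\ref{ellip_0}. The only minor care needed is the justification of Gagliardo--Nirenberg and the Calder\'on--Zygmund type bound $\|\nabla^2 c_{\neq}\|_{L^2}\lesssim \|\triangle c_{\neq}\|_{L^2}$ on the mixed torus/line domain, both of which follow immediately from Fourier analysis (the symbol $|\xi|^{-2}\xi_i\xi_j$ is bounded).
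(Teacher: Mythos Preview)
Your proposal is correct and follows essentially the same approach as the paper: energy testing of the equation for the $L^2$/$H^1$/$H^2$ bounds, followed by a Gagliardo--Nirenberg interpolation for the $L^4$ gradient bound. The only cosmetic difference is that the paper interpolates $\|\nabla c_{\neq}\|_{L^4}\leq C\|c_{\neq}\|_{L^2}^{1/4}\|\triangle c_{\neq}\|_{L^2}^{3/4}$ directly, thereby skipping your separate Plancherel step for $\|\nabla^2 c_{\neq}\|_{L^2}\lesssim\|\triangle c_{\neq}\|_{L^2}$.
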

	\begin{proof}
		By integrating by parts, we have
		\begin{equation}
			\begin{aligned}
				\|\triangle c_{\neq}(t)\|^2_{L^2}+\|\nabla c_{\neq}(t)\|^2_{L^2}
				+\|c_{\neq}(t)\|^2_{L^2}
				&\leq C\|n_{\neq}(t)\|^2_{L^2}.
				\nonumber
			\end{aligned}
		\end{equation}
		Using the Gagliardo-Nirenberg inequality, we obtain
		$$\|\nabla c_{\neq}(t)\|_{L^4}\leq
		C\| c_{\neq}(t)\|^{\frac{1}{4}}_{L^2}
		\|\triangle c_{\neq}(t)\|^{\frac{3}{4}}_{L^2}
		\leq C\|n_{\neq}(t)\|_{L^2}.$$
	\end{proof}
	
	\subsection{A Priori estimates for non-zero mode}
	\begin{lemma}\label{lemma_0}
		Let $f$ be a function such that $f_{\neq}\in H^1(\mathbb{T}\times\mathbb{R})$, there holds
		$$||f_{\neq}||_{L^2(\mathbb{T}\times\mathbb{R})}
		\leq C\|\partial_{x}f_{\neq}\|_{L^{2}(\mathbb{T}\times\mathbb{R})}\leq  C||\nabla f_{\neq}||_{L^2(\mathbb{T}\times\mathbb{R})}.$$
	\end{lemma}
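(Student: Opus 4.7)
The plan is to prove this standard Poincaré-type inequality by working in Fourier variables along the periodic direction $x$. Since $f_{\neq}$ has vanishing $x$-mean by definition of the projector $P_{\neq}$, its Fourier series in $x$ involves only frequencies $k\in\mathbb{Z}\setminus\{0\}$, and the gap between the zero frequency and the nearest non-zero frequency gives a uniform lower bound $|k|\geq 1$.

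First, I would expand $f_{\neq}$ using the Fourier convention introduced in the Notations:
\begin{equation*}
f_{\neq}(t,x,y)=\sum_{k\in\mathbb{Z},\,k\neq 0}\hat{f}_k(t,y)\,e^{ikx}.
\end{equation*}
By Parseval's identity applied in the $x$-variable and Fubini in $y$, one obtains
\begin{equation*}
\|f_{\neq}\|_{L^2(\mathbb{T}\times\mathbb{R})}^2=|\mathbb{T}|\sum_{k\neq 0}\int_{\mathbb{R}}|\hat{f}_k(t,y)|^2\,dy,\qquad
\|\partial_x f_{\neq}\|_{L^2(\mathbb{T}\times\mathbb{R})}^2=|\mathbb{T}|\sum_{k\neq 0}k^2\int_{\mathbb{R}}|\hat{f}_k(t,y)|^2\,dy.
\end{equation*}
Since $k^2\geq 1$ on the support $k\neq 0$, comparing the two sums term-by-term gives $\|f_{\neq}\|_{L^2}^2\leq \|\partial_x f_{\neq}\|_{L^2}^2$, which is the first inequality with constant $C=1$.

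The second inequality $\|\partial_x f_{\neq}\|_{L^2}\leq \|\nabla f_{\neq}\|_{L^2}$ is immediate from the pointwise identity $|\nabla f_{\neq}|^2=|\partial_x f_{\neq}|^2+|\partial_y f_{\neq}|^2\geq |\partial_x f_{\neq}|^2$. There is no genuine obstacle here; the only thing to check carefully is the Fourier normalization so that the constant $|\mathbb{T}|$ cancels on both sides of the comparison. The lemma ultimately reflects the trivial spectral gap of $-\partial_x^2$ acting on the zero-mean subspace of $L^2(\mathbb{T})$, lifted trivially to $L^2(\mathbb{T}\times\mathbb{R})$.
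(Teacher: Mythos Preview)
Your proof is correct and is precisely the standard Fourier-series derivation of the Poincar\'e inequality on $\mathbb{T}$ for mean-zero functions; the paper's own proof simply invokes ``Poincar\'e's inequality'' and omits the details, so you have essentially written out what the authors left implicit.
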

	\begin{proof} It follows from Poincar\'{e}'s inequality immediately and we omit it.
	\end{proof}
	\begin{lemma}\label{lem: non-zero}
		Let $ v_{\neq} $ is determined by $ (\ref{eq:non-zero mode})_{4}. $ Then there holds
		$$\| e^{aA^{-\frac{1}{3}}t}v_{\neq}\|_{L^2L^\infty}\leq CA^{\frac{1}{4}}\|\omega_{\neq}\|_{X_a}.$$
	\end{lemma}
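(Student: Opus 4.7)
The plan is to reduce $\|v_{\neq}\|_{L^\infty_{xy}}$ to quantities controlled by the $X_a$-norm of $\omega_{\neq}$ via an anisotropic Sobolev argument, exploiting Poincaré's inequality in $x$ (available because $v_{\neq}$ has zero $x$-average) and a 1D trace-type inequality in $y$, and then to interpolate in time.

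First, for each fixed $y$, the function $x \mapsto v_{\neq}(x,y)$ has zero mean on $\mathbb{T}$, so the 1D Sobolev/Agmon inequality combined with Poincaré in $x$ gives
\begin{equation*}
\|v_{\neq}(\cdot,y)\|_{L^\infty_x}\le C\|v_{\neq}(\cdot,y)\|_{L^2_x}^{1/2}\|\partial_x v_{\neq}(\cdot,y)\|_{L^2_x}^{1/2}\le C\|\partial_x v_{\neq}(\cdot,y)\|_{L^2_x}.
\end{equation*}
Next, set $h(y):=\|\partial_x v_{\neq}(\cdot,y)\|_{L^2_x}$. Since $v_{\neq}\in H^1$, $h$ vanishes at infinity, so the standard 1D identity $h(y)^2=2\int_{-\infty}^{y}h(s)h'(s)\,ds$ and $|h'(y)|\le\|\partial_y\partial_x v_{\neq}(\cdot,y)\|_{L^2_x}$ yield
\begin{equation*}
\|v_{\neq}\|_{L^\infty_{xy}}^2\le C\|h\|_{L^\infty_y}^2\le C\|\partial_x v_{\neq}\|_{L^2_{xy}}\|\partial_y\partial_x v_{\neq}\|_{L^2_{xy}}.
\end{equation*}

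The second factor is controlled by $\|\nabla\omega_{\neq}\|_{L^2}$ via the Biot-Savart law $v_{\neq}=\nabla^{\bot}\Delta^{-1}\omega_{\neq}$: the second derivatives of $v_{\neq}$ are Riesz-type transforms of $\nabla\omega_{\neq}$, so $\|\partial_y\partial_x v_{\neq}\|_{L^2}\le C\|\nabla\omega_{\neq}\|_{L^2}$. Inserting this gives the pointwise-in-$t$ bound
\begin{equation*}
\|v_{\neq}(t)\|_{L^\infty}^2\le C\|\partial_x v_{\neq}(t)\|_{L^2}\,\|\nabla\omega_{\neq}(t)\|_{L^2}.
\end{equation*}

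Finally, multiply by $e^{2aA^{-1/3}t}$, integrate in $t$, and apply Cauchy-Schwarz in time:
\begin{equation*}
\|e^{aA^{-1/3}t}v_{\neq}\|_{L^2 L^\infty}^2\le C\,\|e^{aA^{-1/3}t}\partial_x v_{\neq}\|_{L^2 L^2}\,\|e^{aA^{-1/3}t}\nabla\omega_{\neq}\|_{L^2 L^2}.
\end{equation*}
Now I invoke the definition of $\|\cdot\|_{X_a}$: since $\partial_x v_{\neq}=\nabla^{\bot}\Delta^{-1}\partial_x\omega_{\neq}$, the second term in the $X_a$-norm gives $\|e^{aA^{-1/3}t}\partial_x v_{\neq}\|_{L^2 L^2}\le\|\omega_{\neq}\|_{X_a}$, while the last term gives $\|e^{aA^{-1/3}t}\nabla\omega_{\neq}\|_{L^2 L^2}\le A^{1/2}\|\omega_{\neq}\|_{X_a}$. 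Multiplying and taking the square root yields exactly the $A^{1/4}$ bound. The only delicate point is keeping track of the $A$-powers — the $A^{1/4}$ factor is not cosmetic but arises precisely from the $A^{-1/2}$ weight on $\|\nabla\omega_{\neq}\|_{L^2 L^2}$ built into $X_a$, so the proof cannot be improved without refining either the Biot-Savart step or the 1D Sobolev step. No serious analytic obstacle is expected; the main care is in using Poincaré in $x$ at the very first step so that the non-zero-mode assumption genuinely enters.
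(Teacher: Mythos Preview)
Your argument is correct and reaches the stated bound $A^{1/4}$, but the route differs from the paper's. The paper works in Fourier series in $x$: starting from $\|v_{\neq}\|_{L^\infty}\le \sum_{k\neq 0}\|\hat v_{\neq}(k,\cdot)\|_{L^\infty_y}$, applying the 1D Gagliardo--Nirenberg inequality in $y$ at each frequency, and then summing with a weight $|k|^{1+2\varepsilon}$ to obtain the interpolation inequality $\|v_{\neq}\|_{L^\infty}^2\le C\|\partial_x\omega_{\neq}\|_{L^2}^{2\varepsilon}\|\omega_{\neq}\|_{L^2}^{2(1-\varepsilon)}$ for any $\varepsilon\in(0,\tfrac12]$. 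Thus the paper ultimately leans on the \emph{third} and \emph{fourth} terms of the $X_a$-norm (the $A^{-1/3}$-weighted $L^2L^2$ of $\omega_{\neq}$ and the $A^{-1}$-weighted $L^2L^2$ of $\nabla\omega_{\neq}$), and chooses $\varepsilon=\tfrac14$ to land on $A^{1/4}$. Your proof instead couples the \emph{second} term (the unweighted inviscid-damping piece $\|e^{aA^{-1/3}t}\nabla^\perp\Delta^{-1}\partial_x\omega_{\neq}\|_{L^2L^2}$, which you identify with $\|e^{aA^{-1/3}t}\partial_x v_{\neq}\|_{L^2L^2}$) with the fourth term, via a clean physical-space anisotropic Sobolev bound. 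Your path is shorter and avoids the $\varepsilon$-summation trick; the paper's path, on the other hand, carries a free parameter and could in principle be pushed to $A^{(1+2\varepsilon)/6}$ for small $\varepsilon$, i.e.\ arbitrarily close to $A^{1/6}$, so it is slightly sharper in potential even though the lemma only records $A^{1/4}$. Your closing remark that ``the proof cannot be improved without refining either the Biot--Savart step or the 1D Sobolev step'' is therefore true of your route but not of the paper's.
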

	\begin{proof}
		Recall that $ v=\nabla^{\bot}\Phi=(\partial_{y}\Phi, -\partial_{x}\Phi) $ and $ \triangle\Phi=\omega, $ and using the Fourier series, we get
		\begin{equation*}
			v_{1,\neq}=\sum_{k\neq 0}\widehat{\partial_{y}\Phi}_{\neq}(k,y)e^{ikx},
		\end{equation*}
		\begin{equation*}
			v_{2,\neq}=\sum_{k\neq 0}\widehat{\partial_{x}\Phi}_{\neq}(k,y)e^{ikx}=-\sum_{k\neq 0}ik\widehat{\Phi}_{\neq}(k,y)e^{ikx},
		\end{equation*}
		and
		\begin{equation*}
			\omega_{\neq}=\sum_{k\neq 0}(\partial_{y}^{2}-k^{2})\widehat{\Phi}_{\neq}(k,y)e^{ikx},
		\end{equation*}
		where $ \hat{\Phi}_{\neq}=\frac{1}{2\pi}\int_{\mathbb{T}}\Phi_{\neq}(x,y)e^{-ikx}dx. $
		Then direct calculations indicate that
		\begin{equation}\label{1}
			\begin{aligned}
				\|v_{1,\neq}\|_{L^{2}}^{2}=2\pi\int_{\mathbb{R}}\sum_{k\neq 0}\left(\widehat{\partial_{y}\Phi}_{\neq}(k,y)\overline{\widehat{\partial_{y}\Phi}}_{\neq}(k,y) \right)dy=2\pi\sum_{k\neq 0}\|\widehat{\Phi}_{\neq}'(k,\cdot)\|_{L^{2}}^{2},
			\end{aligned}
		\end{equation}
		\begin{equation}\label{2}
			\begin{aligned}
				\|v_{2,\neq}\|_{L^{2}}^{2}=2\pi\int_{\mathbb{R}}\sum_{k\neq 0}\left(|k|^{2}\widehat{\Phi}_{\neq}(k,y)\overline{\widehat{\Phi}}_{\neq}(k,y) \right)dy=2\pi\sum_{k\neq 0}|k|^{2}\|\widehat{\Phi}_{\neq}(k,\cdot)\|_{L^{2}}^{2},
			\end{aligned}
		\end{equation}
		and
		\begin{equation}\label{3}
			\begin{aligned}
				\|\omega_{\neq}\|_{L^{2}}^{2}=&2\pi\int_{\mathbb{R}}\sum_{k\neq 0}\left((\partial_{y}^{2}-k^{2})\widehat{\Phi}_{\neq}(k,y)(\partial_{y}^{2}-k^{2})\overline{\widehat{\Phi}}_{\neq}(k,y) \right)dy\\=&\sum_{k\neq 0}2\pi\left(|k|^{4}\|\widehat{\Phi}_{\neq}(k,\cdot)\|_{L^{2}}^{2}+2|k|^{2}\|\widehat{\Phi}_{\neq}'(k,\cdot)\|_{L^{2}}^{2}+\|\widehat{\Phi}_{\neq}''(k,\cdot)\|_{L^{2}}^{2} \right).
			\end{aligned}
		\end{equation}
		It follows from (\ref{1})-(\ref{3}) that
		\begin{equation}\label{4}
			\|\nabla v_{\neq}\|_{L^{2}}\leq C\|\omega_{\neq}\|_{L^{2}}.
		\end{equation}
		
		Consider that $ v_{\neq}=\sum_{k\neq 0}\widehat{v}_{\neq}(k,y)e^{ikx}, $ using the Gagliardo-Nirenberg inequality, we obtain
		\begin{equation*}
			\|v_{\neq}\|_{L^{\infty}}\leq \sum_{k\neq 0}\|\widehat{v}_{\neq}(k,y)\|_{L_{y}^{\infty}}\leq C\sum_{k\neq 0}\|\widehat{v}_{\neq}(k,\cdot)\|_{L^{2}}^{\frac12}\|\partial_{y}\widehat{v}_{\neq}(k,\cdot)\|_{L^{2}}^{\frac12}.
		\end{equation*}
		Due to H$\ddot{\mathrm{o}}$lder's inequality, there holds
		\begin{equation*}
			\begin{aligned}
				\|v_{\neq}\|_{L^{\infty}}\leq&C\left(\sum_{k\neq 0}|k|^{1+2\varepsilon}||\widehat{v}_{\neq}(k,\cdot)\|_{L^{2}}\|\partial_{y}\widehat{v}_{\neq}(k,\cdot)\|_{L^{2}} \right)^{\frac12}\left(\sum_{k\neq 0}\frac{1}{|k|^{1+2\varepsilon}} \right)^{\frac12}\\
\leq&C\left(\sum_{k\neq 0}|k|^{2\varepsilon}\|\widehat{\nabla v}_{\neq}(k,\cdot)\|_{L^{2}}^{2} \right)^{\frac12},
			\end{aligned}
		\end{equation*}
		where $ \varepsilon\in (0,\frac12]. $ Using  H$\ddot{\mathrm{o}}$lder's inequality again and (\ref{4}), we have
		\begin{equation*}
			\begin{aligned}
				\|v_{\neq}\|_{L^{\infty}}^{2}\leq&C\sum_{k\neq 0}\|k\widehat{\nabla v}_{\neq}(k,\cdot)\|_{L^{2}}^{2\varepsilon}\|\widehat{\nabla v}_{\neq}(k,\cdot)\|_{L^{2}}^{2(1-\varepsilon)}\\
\leq&C\|\partial_{x}\nabla v_{\neq}\|_{L^{2}}^{2\varepsilon}\|\nabla v_{\neq}\|_{L^{2}}^{2(1-\varepsilon)}
\leq C\|\partial_{x}\omega_{\neq}\|_{L^{2}}^{2\varepsilon}\|\omega_{\neq}\|_{L^{2}}^{2(1-\varepsilon)},
			\end{aligned}
		\end{equation*}
		which follows that
		\begin{equation*}
			\|e^{aA^{-\frac13}t}v_{\neq}\|_{L^{2}L^{\infty}}\leq C\|e^{aA^{-\frac13}t}\partial_{x}\omega_{\neq}\|_{L^{2}L^{2}}^{\varepsilon}\|e^{aA^{-\frac13}t}\omega_{\neq}\|_{L^{2}L^{2}}^{1-\varepsilon}\leq CA^{\frac{1+2\varepsilon}{6}}\|\omega_{\neq}\|_{X_{a}},
		\end{equation*}
		for any $ \varepsilon\in (0,\frac12]. $
		
		We complete the proof by selecting $ \varepsilon=\frac14. $
	\end{proof}
	
	\subsection{A Priori estimates for zero mode}
	\begin{lemma}\label{lem: zero mode}
		Under the assumptions (\ref{assumption_0})-(\ref{assumption_1}) and $ C_{*}^{4}M^{2}<3 $, there exists a positive constant $ A_{3} $ depending on $ K_{\neq}, K_{\infty}  $ and $ M $, such that if $ A\geq A_{3}, $ there hold
		\begin{equation}\label{n0}
			\|n_{0}\|_{L^{\infty}L^{2}}\leq C\left(\|n_{\rm in,0}\|_{L^{2}}+\frac{M^{3}}{(3-C_{*}^{4}M^{2})^{\frac34}}+1\right):=H_{1},
		\end{equation}
		\begin{equation}\label{n0 4}
			\|n_{0}\|_{L^{\infty}L^{4}}\leq C\left(\|n_{\rm in,0}\|_{L^{4}}+\|n_{\rm in,0}\|_{L^{2}}^{\frac32}+\frac{M^{\frac92}}{(3-C_{*}^{4}M^{2})^{\frac98}}+1 \right):=H_{2},
		\end{equation}
		\begin{equation}\label{omega 0}
			\|\omega_0\|_{L^\infty L^2}+A^{-\frac12}\|\partial_y\omega_0\|_{L^2L^2}\leq C\left(\|\omega_{\rm in,0}\|_{L^{2}}+1\right):=H_{3},
		\end{equation}
		\begin{equation}\label{v01}
			\|v_{1,0}\|_{L^\infty L^\infty}\leq C\left(\|v_{\rm in,0}\|_{L^{2}}+\|\omega_{\rm in,0}\|_{L^{2}}+1 \right):=H_{4}.
		\end{equation}		
	\end{lemma}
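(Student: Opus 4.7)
I would establish the four bounds in the order stated, since each one relies on its predecessors. The bottleneck is \eqref{n0}, where the sharp mass condition $C_{*}^{4}M^{2}<3$ is essential; the remaining three bounds are essentially consequences.

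For \eqref{n0}, I would perform the $L^{2}$ energy identity on the first equation of \eqref{eq:zero mode} by testing against $n_{0}$. The diffusion contributes $\frac{1}{A}\|\partial_{y}n_{0}\|_{L^{2}}^{2}$ on the left. The most dangerous source, $-\frac{1}{A}\partial_{y}(n_{0}^{2}\partial_{y}c_{0})$, yields, after two integrations by parts and the elliptic identity $-\partial_{yy}c_{0}+c_{0}=n_{0}$, exactly $\frac{1}{3A}\|n_{0}\|_{L^{4}}^{4}-\frac{1}{3A}\int n_{0}^{3}c_{0}\,dy$. The sharp inequality \eqref{sharp} together with mass conservation $\|n_{0}\|_{L^{1}}=M$ then give $\frac{1}{3A}\|n_{0}\|_{L^{4}}^{4}\leq\frac{C_{*}^{4}M^{2}}{3A}\|\partial_{y}n_{0}\|_{L^{2}}^{2}$; the strict bound $C_{*}^{4}M^{2}<3$ allows absorption into the diffusion, leaving the positive residue $\frac{3-C_{*}^{4}M^{2}}{3A}\|\partial_{y}n_{0}\|_{L^{2}}^{2}$. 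All remaining sources are bounded using Lemma \ref{ellip_0} for $\|c_{0}\|_{L^{\infty}},\|\partial_{y}c_{0}\|_{L^{\infty}}\leq C\|n_{0}\|_{L^{2}}$, Lemma \ref{ellip_2} for $\|\nabla c_{\neq}\|_{L^{4}}\leq C\|n_{\neq}\|_{L^{2}}$, Lemma \ref{lem: non-zero} for $\|v_{\neq}\|_{L^{2}L^{\infty}}\leq CA^{1/4}\|\omega_{\neq}\|_{X_{a}}$, the bootstrap hypotheses \eqref{assumption_0}--\eqref{assumption_1}, and Young's inequality; each coupling carries a factor $A^{-1}$, so for $A\geq A_{3}$ large enough they are all absorbed into the residue. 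Gronwall then closes the estimate, with the explicit factor $M^{3}/(3-C_{*}^{4}M^{2})^{3/4}$ arising from balancing the subcritical $\frac{1}{2A}\|n_{0}\|_{L^{3}}^{3}$ contribution (produced by $-\frac{1}{A}\partial_{y}(n_{0}\partial_{y}c_{0})$ after integration by parts) against the absorption residue.

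The remaining estimates follow the same energy pattern. For \eqref{n0 4}, I would test the $n_{0}$ equation against $n_{0}^{3}$: the diffusion produces $\frac{3}{4A}\|\partial_{y}(n_{0}^{2})\|_{L^{2}}^{2}$ and the critical chemotactic term generates $\frac{3}{5A}\|n_{0}\|_{L^{6}}^{6}$. The sharp GN is no longer needed here, since $\|n_{0}\|_{L^{6}}^{6}\leq(2K_{\infty})^{2}\|n_{0}\|_{L^{4}}^{4}$ by hypothesis \eqref{assumption_1}, and the $L^{\infty}L^{2}$ bound \eqref{n0} from the previous step is inserted wherever a spare $n_{0}$-factor appears. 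For \eqref{omega 0}, a direct $L^{2}$ energy estimate on the $\omega_{0}$ equation in \eqref{eq:zero mode}, whose sole nonlinearity $-\frac{1}{A}\nabla\cdot(v_{\neq}\omega_{\neq})_{0}$ is controlled by Cauchy--Schwarz with Lemma \ref{lem: non-zero} and the $X_{a}$ bound on $\omega_{\neq}$, simultaneously gives $\|\omega_{0}\|_{L^{\infty}L^{2}}$ and $A^{-1/2}\|\partial_{y}\omega_{0}\|_{L^{2}L^{2}}$. For \eqref{v01}, since $\mathrm{div}\,v_{0}=0$ forces $v_{2,0}=0$ so that $\omega_{0}=\partial_{y}v_{1,0}$, I would first perform the $L^{2}$ energy estimate on \eqref{v0} to bound $\|v_{1,0}\|_{L^{\infty}L^{2}}$, and then conclude via the 1D Gagliardo--Nirenberg inequality $\|v_{1,0}\|_{L^{\infty}}\leq C\|v_{1,0}\|_{L^{2}}^{1/2}\|\omega_{0}\|_{L^{2}}^{1/2}$ using the $\omega_{0}$ bound just obtained.

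The hard part is \eqref{n0}. The supercritical volume-filling flux $n_{0}^{2}\partial_{y}c_{0}$, combined with the Helmholtz inversion $c_{0}=(1-\partial_{yy})^{-1}n_{0}$, produces a quartic $\|n_{0}\|_{L^{4}}^{4}$ source which by the sharp 1D Gagliardo--Nirenberg inequality is proportional to $M^{2}$ times the diffusion. The strict inequality $C_{*}^{4}M^{2}<3$ is precisely the sharpness threshold permitting absorption, and without it zero-mode analysis cannot control $\|n_{0}\|_{L^{2}}$, causing the entire bootstrap to fail.
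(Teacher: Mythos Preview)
Your outline for \eqref{omega 0} and \eqref{v01} matches the paper exactly, and for \eqref{n0} you correctly identify the decisive mechanism: the quartic source $\frac{2}{3A}\|n_{0}\|_{L^{4}}^{4}$ is estimated by the sharp one-dimensional Gagliardo--Nirenberg inequality and absorbed under the strict condition $C_{*}^{4}M^{2}<3$. There are, however, two genuine gaps in how you close the remaining terms.

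First, your phrase ``each coupling carries a factor $A^{-1}$, so for $A\geq A_{3}$ large enough they are all absorbed into the residue'' followed by ``Gronwall then closes the estimate'' does not work. The dissipation residue $\frac{3-C_{*}^{4}M^{2}}{3A}\|\partial_{y}n_{0}\|_{L^{2}}^{2}$ carries the same factor $A^{-1}$ as the zero-mode sources, so enlarging $A$ gives no gain in comparing them (it only helps with the \emph{non-zero-mode} sources, which are time-integrated and controlled by $K_{\neq},K_{\infty}$). After absorbing the critical $L^{4}$ term, the zero-mode remainder from $I_{3}=\frac{1}{A}\int n_{0}^{3}\,dy$ (and from $I_{2}$ if you do not exploit its sign) yields on the right a power $\|n_{0}\|_{L^{2}}^{10/3}$ that is superlinear in $\|n_{0}\|_{L^{2}}^{2}$, so a standard Gronwall argument blows up. The paper's closure is not Gronwall but the \emph{Nash inequality} $\|\partial_{y}n_{0}\|_{L^{2}}^{2}\geq C\|n_{0}\|_{L^{2}}^{6}/M^{4}$, which converts the leftover dissipation into a superlinear damping $-\frac{C\tau}{AM^{4}}\|n_{0}\|_{L^{2}}^{6}$; the resulting ODE-type inequality has a stable equilibrium at the level $(M^{4}/\tau)^{3/4}=M^{3}/(1-\tfrac{1}{3}C_{*}^{4}M^{2})^{3/4}$, which is exactly the factor you quote. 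So the factor is right but the mechanism producing it is Nash, not Gronwall.

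Second, for \eqref{n0 4} your shortcut $\|n_{0}\|_{L^{6}}^{6}\leq(2K_{\infty})^{2}\|n_{0}\|_{L^{4}}^{4}$ would yield an $H_{2}$ depending on $K_{\infty}$. That does not prove the stated bound, and more importantly it would make the later bootstrap circular: in the proof of Proposition~\ref{prop 2} the constant $K_{\infty}$ is \emph{defined} through $D_{4}=H_{2}+H_{5}$ via the Moser iteration, so $H_{2}$ must be independent of $K_{\infty}$. The paper therefore avoids the $L^{\infty}$ hypothesis on this term: it estimates $\|n_{0}^{2}\|_{L^{3}}^{3}\leq C\|n_{0}^{2}\|_{L^{2}}^{5/2}\|\partial_{y}n_{0}^{2}\|_{L^{2}}^{1/2}$ by ordinary Gagliardo--Nirenberg and then repeats the Nash closure, now with $\|n_{0}^{2}\|_{L^{1}}\leq H_{1}^{2}$ (from the previous step) playing the role of the mass. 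The hypothesis \eqref{assumption_1} is used only to bound the time-integrated non-zero-mode contributions, where it enters $A_{3}$ but not $H_{2}$.
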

	\begin{proof}
		\underline{\textbf{I. Estimate of $ \|n_{0}\|_{L^{\infty}L^{2}}. $}}
		Multiplying both sides of $(\ref{eq:zero mode})_{1}$ by $2n_0$ and integrating it with $ y $ over $\mathbb{R}$, we obtain
		\begin{equation}\label{energy n0}
			\begin{aligned}
				&\dfrac{d}{dt}\| n_0\|_{L^2}^2+\dfrac{2}{A}\|\partial_yn_0\|_{L^2}^2\\
				=&\dfrac{2}{A}\int_{\mathbb{R}}(v_{\neq}n_{\neq})_0\nabla n_0dy+\dfrac{2}{A}\int_{\mathbb{R}}n^2_0\partial_yc_0\partial_yn_0dy+\dfrac{2}{A}\int_{\mathbb{R}}(n_{\neq}^2)_0\partial_{y}c_0\partial_{y}n_0dy\\
				&+\dfrac{4}{A}\int_{\mathbb{R}}(n_{\neq}n_0\partial_{y}c_{\neq})_0\partial_{y}n_0dy+\dfrac{2}{A}\int_{\mathbb{R}}((n^2_{\neq})_{\neq}\partial_yc_{\neq})_0\partial_yn_0dy\\&+\frac{2}{A}\int_{\mathbb{R}}n_{0}\partial_{y}c_{0}\partial_{y}n_{0}dy+\frac{2}{A}\int_{\mathbb{R}}(n_{\neq}\nabla c_{\neq})_{0}\partial_{y}n_{0}dy\\
				\leq&\dfrac{5\delta}{A}\|\partial_yn_0\|_{L^2}^2+\dfrac{C(\delta)}{A}\|(v_{\neq}n_{\neq})_0\|_{L^2}^2+\dfrac{C(\delta)}{A}\|(n_{\neq}^2)_0\partial_{y}c_0\|_{L^2}^2\\
				&+\dfrac{C(\delta)}{A}\|(n_{\neq}n_0\partial_{y}c_{\neq})_0\|_{L^2}^2+\frac{C(\delta)}{A}\|((n^2_{\neq})_{\neq}\partial_{y}c_{\neq})_{0}\|_{L^{2}}^{2}+\frac{C(\delta)}{A}\|(n_{\neq}\nabla c_{\neq})_{0}\|_{L^{2}}^{2}\\&+\dfrac{2}{A}\int_{\mathbb{R}}n^2_0\partial_yc_0\partial_yn_0dy+\frac{2}{A}\int_{\mathbb{R}}n_{0}\partial_{y}c_{0}\partial_{y}n_{0}dy,
			\end{aligned}
		\end{equation}
		where $ \delta $ is a positive constant.	
		Due to integration by parts and $(\ref{eq:zero mode})_2$, there holds
		\begin{equation}\label{n32}
			\begin{aligned}
				&\dfrac{2}{A}\int_{\mathbb{R}}n^2_0\partial_yc_0\partial_yn_0dy+\frac{2}{A}\int_{\mathbb{R}}n_{0}\partial_{y}c_{0}\partial_{y}n_{0}dy\\=&-\dfrac{2}{3A}\int_{\mathbb{R}}n_0^3\partial_{yy}c_0dy-\frac{1}{A}\int_{\mathbb{R}}n_{0}^{2}\partial_{yy}c_{0}dy\\
				=&\dfrac{2}{3A}\int_{\mathbb{R}}n_0^4dy-\dfrac{2}{3A}\int_{\mathbb{R}}c_0n_0^3dy+\frac{1}{A}\int_{\mathbb{R}}n_{0}^{3}dy-\frac{1}{A}\int_{\mathbb{R}}c_{0}n_{0}^{2}dy\\:=&I_{1}+I_{2}+I_{3}+I_{4}.
			\end{aligned}
		\end{equation}
		Using the Gagliardo-Nirenberg inequality
		\begin{equation}\label{c*}
			\|n_{0}\|_{L^{4}}\leq C_{*}\|n_{0}\|_{L^{1}}^{\frac12}\|\partial_{y}n_{0}\|_{L^{2}}^{\frac12},\quad\|c_{0}\|_{L^{\infty}}\leq C \|c_{0}\|_{L^{2}}^{\frac12}\|\partial_{y}c_{0}\|_{L^{2}}^{\frac12},
		\end{equation}
		$$ \|n_{0}\|_{L^{3}}\leq C\|n_{0}\|_{L^{2}}^{\frac56}\|\partial_{y}n_{0}\|_{L^{2}}^{\frac16}, $$
		and Young's inequality, {\textbf{Lemma \ref{1}}}, we get
		\begin{equation*}
			I_{1}\leq \dfrac{2}{3A}C_{*}^{4}\|n_{0}\|_{L^1}^2\|\partial_{y}n_{0}\|_{L^{2}}^2\leq \dfrac{2}{3A}C_{*}^{4}M^2\|\partial_{y}n_{0}\|_{L^{2}}^2,
		\end{equation*}
		\begin{equation*}
			\begin{aligned}
				I_{2}\leq&\frac{2}{3A}\|c_{0}\|_{L^{\infty}}\|n_{0}\|_{L^{3}}^{3}\leq\frac{C}{A}\|c_{0}\|_{L^{2}}^{\frac12}\|\partial_{y}c_{0}\|_{L^{2}}^{\frac12}\|n_{0}\|_{L^{2}}^{\frac52}\|\partial_{y}n_{0}\|_{L^{2}}^{\frac12}\\\leq&\frac{C}{A}\|n_{0}\|_{L^{2}}^{\frac72}\|\partial_{y}n_{0}\|_{L^{2}}^{\frac12}\leq\frac{\delta}{A}\|\partial_{y}n_{0}\|_{L^{2}}^{2}+\frac{C(\delta)}{A}\|n_{0}\|_{L^{2}}^{\frac{14}{3}},
			\end{aligned}
		\end{equation*}	
		\begin{equation*}
			I_{3}\leq \frac{C}{A}\|n_{0}\|_{L^{2}}^{\frac52}\|\partial_{y}n_{0}\|_{L^{2}}^{\frac12}\leq\frac{\delta}{A}\|\partial_{y}n_{0}\|_{L^{2}}^{2}+\frac{C(\delta)}{A}\|n_{0}\|_{L^{2}}^{\frac{10}{3}},
		\end{equation*}
		and
		\begin{equation*}
			I_{4}\leq \frac{1}{A}\|c_{0}\|_{L^{\infty}}\|n_{0}\|_{L^{2}}^{2}\leq\frac{C}{A}\|n_{0}\|_{L^{2}}^{3}.
		\end{equation*}
		Combining with $I_{1}-I_{4}$, (\ref{n32}) yields that
		\begin{equation*}
			\begin{aligned}
				&\frac{2}{A}\int_{\mathbb{R}}n^2_{0}\partial_{y}c_{0}\partial_{y}n_{0}dy+\frac{2}{A}\int_{\mathbb{R}}n_{0}\partial_{y}c_{0}\partial_{y}n_{0}dy\\\leq&\frac{C(\delta)}{A}\left(\|n_{0}\|_{L^{2}}^{\frac{14}{3}}+\|n_{0}\|_{L^{2}}^{\frac{10}{3}}+\|n_{0}\|_{L^{2}}^{3} \right)+\left(2\delta+\frac23C_{*}^{4}M^{2} \right)\frac{\|\partial_{y}n_{0}\|_{L^{2}}^{2}}{A},
			\end{aligned}
		\end{equation*}
		substituting it into (\ref{energy n0}), we get
		\begin{equation}\label{energy n0 1}
			\begin{aligned}
				\frac{d}{dt}\|n_{0}\|_{L^{2}}^{2}\leq&-\left(2-7\delta-\frac23C_{*}^{4}M^{2}\right)\frac{\|\partial_{y}n_{0}\|_{L^{2}}^{2}}{A}+\frac{C(\delta)}{A}\|n_{0}\|_{L^{2}}^{\frac{14}{3}}+\frac{C(\delta)}{A}\|n_{0}\|_{L^{2}}^{\frac{10}{3}}\\&+\frac{C}{A}\|n_{0}\|_{L^{2}}^{3}+\frac{C(\delta)}{A}\left(\|v_{\neq}n_{\neq}\|_{L^{2}}^{2}+\|(n_{\neq}^{2})_{\neq}\partial_{y}c_{\neq}\|_{L^{2}}^{2}\right)\\&+\frac{C(\delta)}{A}\left(\|n_{\neq}n_{0}\partial_{y}c_{\neq}\|_{L^{2}}^{2}+\|(n_{\neq}^{2})_{0}\partial_{y}c_{0}\|_{L^{2}}^{2}+\|n_{\neq}\nabla c_{\neq}\|_{L^{2}}^{2} \right).
			\end{aligned}
		\end{equation}
		Notice that $ C_{*}^{4}M^{2}<3 $, then letting 
		$$ 7\delta=1-\frac13 C_{*}^{4}M^{2},\quad \tau=2-7\delta-\frac23C_{*}^{4}M^{2}=1-\frac13 C_{*}^{4}M^{2}>0, $$
		and
		using the Nash inequality
		$$
		-\|\partial_yn_0\|_{L^2}^2\leq-C\dfrac{\| n_0\|_{L^2}^6}{\| n_0\|_{L^1}^4}\leq-C\dfrac{\| n_0\|_{L^2}^6}{M^4},
		$$ 
		it follows from (\ref{energy n0 1}) that
		\begin{equation}\label{energy n0 2}
			\begin{aligned}
				\frac{d}{dt}\|n_{0}\|_{L^{2}}^{2}\leq&-\frac{C\tau}{AM^{4}}\|n_{0}\|_{L^{2}}^{3}\left(\|n_{0}\|_{L^{2}}^{3}-\frac{M^{4}}{\tau}\|n_{0}\|_{L^{2}}^{\frac53}-\frac{M^{4}}{\tau}\|n_{0}\|_{L^{2}}^{\frac13}-\frac{M^{4}}{\tau} \right)\\&+\frac{C}{A}\left(\|v_{\neq}n_{\neq}\|_{L^{2}}^{2}+\|(n_{\neq}^{2})_{\neq}\partial_{y}c_{\neq}\|_{L^{2}}^{2}+\|n_{\neq}n_{0}\partial_{y}c_{\neq}\|_{L^{2}}^{2}\right)\\&+\frac{C}{A}\left(\|(n_{\neq}^{2})_{0}\partial_{y}c_{0}\|_{L^{2}}^{2}+\|n_{\neq}\nabla c_{\neq}\|_{L^{2}}^{2} \right)\\\leq&-\frac{C\tau}{AM^{4}}\|n_{0}\|_{L^{2}}^{3}\left(\|n_{0}\|_{L^{2}}^{3}-\left(\frac{M^{4}}{\tau} \right)^{\frac94}-1 \right)\\&+\frac{C}{A}\left(\|v_{\neq}n_{\neq}\|_{L^{2}}^{2}+\|(n_{\neq}^{2})_{\neq}\partial_{y}c_{\neq}\|_{L^{2}}^{2}+\|n_{\neq}n_{0}\partial_{y}c_{\neq}\|_{L^{2}}^{2}\right)\\&+\frac{C}{A}\left(\|(n_{\neq}^{2})_{0}\partial_{y}c_{0}\|_{L^{2}}^{2}+\|n_{\neq}\nabla c_{\neq}\|_{L^{2}}^{2} \right).
			\end{aligned}
		\end{equation}
		For all $ t\geq 0, $ we denote
		\begin{equation*}
			\begin{aligned}
				G(t):=&\frac{C}{A}\int_{0}^{t}\bigg(\|v_{\neq}n_{\neq}\|_{L^{2}}^{2}+\|(n_{\neq}^{2})_{\neq}\partial_{y}c_{\neq}\|_{L^{2}}^{2}+\|n_{\neq}n_{0}\partial_{y}c_{\neq}\|_{L^{2}}^{2}\\&+\|(n_{\neq}^{2})_{0}\partial_{y}c_{0}\|_{L^{2}}^{2}+\|n_{\neq}\nabla c_{\neq}\|_{L^{2}}^{2} \bigg)ds.
			\end{aligned}
		\end{equation*}
		Due to 	{\textbf{Lemma \ref{lemma_0}}} and (\ref{4}), we find
		\begin{equation}\label{v0 l2}
			\|v_{\neq}\|_{L^2}\leq C\|\nabla v_{\neq}\|_{L^2}\leq C\|\omega_{\neq}\|_{L^2},
		\end{equation}
		and notice that
		\begin{equation}\label{n0 infty 2}
			\|n_{0}\|_{L^{\infty}L^{2}}\leq \|n_{0}\|_{L^{\infty}L^{1}}^{\frac12}\|n_{0}\|_{L^{\infty}L^{\infty}}^{\frac12}\leq CM^{\frac12}K_{\infty}^{\frac12},
		\end{equation}
		then using assumptions (\ref{assumption_0})-(\ref{assumption_1}), {\textbf{Lemma \ref{ellip_0}}} and {\textbf{Lemma \ref{ellip_2}}}, direct calculations indicate that
		\begin{equation*}
			\begin{aligned}
				G(t)\leq&\dfrac{C}{A}\left(\|v_{\neq}\|_{L^2L^2}^2\| n_{\neq}\|_{L^\infty L^\infty}^2+\|n^2_{\neq}\|_{L^\infty L^\infty}^2\|\partial_yc_{\neq}\|_{L^2L^2}^2\right)\\&+\frac{C}{A}\left(\|n_{\neq}n_0\|_{L^{\infty}L^{\infty}}^{2}\|\partial_{y}c_{\neq}\|_{L^{2}L^{2}}^{2}+\|(n_{\neq}^2)_0\|_{L^{2}L^{2}}^2\|\partial_{y}c_0\|_{L^{\infty}L^{\infty}}^2\right)\\&+\frac{C}{A}\|n_{\neq}\|_{L^{\infty}L^{\infty}}^{2}\|\nabla c_{\neq}\|_{L^{2}L^{2}}^{2}\\
				\leq&\dfrac{C}{A}\left(\|\omega_{\neq}\|_{L^2L^2}^2\| n_{\neq}\|_{L^\infty L^\infty}^2+\|n\|_{L^{\infty}L^{\infty}}^{4}\| n_{\neq}\|_{L^{2}L^{2}}^{2}\right)\\
				&+\frac{C}{A}\|n_{\neq}\|_{L^{\infty}L^{\infty}}^2\|n_{\neq}\|_{L^2L^2}^2\|n_0\|_{L^{\infty}L^2}^2+\frac{C}{A}\|n_{\neq}\|_{L^{\infty}L^{\infty}}^{2}\|n_{\neq}\|_{L^{2}L^{2}}^{2}\\
				\leq&\frac{C}{A^{\frac23}}\left(K_{\infty}^{2}\|\omega_{\neq}\|_{X_{a}}^{2}+K_{\infty}^{4}\|n_{\neq}\|_{X_{a}}^{2}+MK_{\infty}^{3}\|n_{\neq}\|_{X_{a}}^{2}+K_{\infty}^{2}\|n_{\neq}\|_{X_{a}}^{2} \right)\\
				\leq&\frac{C}{A^{\frac23}}\left(K_{\neq}^{2}K_{\infty}^{2}+K_{\neq}^{2}K_{\infty}^{4}+MK_{\neq}^{2}K_{\infty}^{3} \right)\leq C,
			\end{aligned}
		\end{equation*}
		provided that 
		$$
		A\geq\left(K_{\neq}^{2}K_{\infty}^{2}+K_{\neq}^{2}K_{\infty}^{4}+MK_{\neq}^{2}K_{\infty}^{3} \right)^\frac32.
		$$
		Then (\ref{energy n0 2}) can be written as follows
		\begin{equation*}
			\begin{aligned}
				\frac{d}{dt}\left(\|n_{0}\|_{L^{2}}^{2}-G(t) \right)\leq-\frac{C\tau}{AM^{4}}\|n_{0}\|_{L^{2}}^{3}\left(\|n_{0}\|_{L^{2}}^{3}-\frac{M^{9}}{\tau^{\frac94}}-1 \right),
			\end{aligned}
		\end{equation*}
		which implies that
		\begin{equation*}
			\begin{aligned}
				\|n_{0}\|_{L^{\infty}L^{2}}\leq& C\left(\|n_{\rm in,0}\|_{L^{2}}+\frac{M^{3}}{\tau^{\frac34}}+1 \right)= C\left(\|n_{\rm in,0}\|_{L^{2}}+\frac{M^{3}}{(3-C_{*}^{4}M^{2})^{\frac34}}+1\right).
			\end{aligned}
		\end{equation*}
		Hence (\ref{n0}) holds.
		
		\underline{\textbf{II. Estimate of $ \|n_{0}\|_{L^{\infty}L^{4}}. $}}
		Multiplying both sides of $(\ref{eq:zero mode})_{1}$ by $4n_0^3$ and integrating it with $ y $ over $\mathbb{R}$, we obtain
		\begin{equation}\label{energy n0l4}
			\begin{aligned}
				&\dfrac{d}{dt}\| n_0\|_{L^4}^4+\dfrac{3}{A}\|\partial_yn_0^{2}\|_{L^2}^2\\
				=&\dfrac{6}{A}\int_{\mathbb{R}}(v_{\neq}n_{\neq})_0n_0\nabla n_0^2dy+\dfrac{4}{A}\int_{\mathbb{R}}n^2_0\partial_yc_0\partial_yn_0^3dy+\dfrac{6}{A}\int_{\mathbb{R}}(n_{\neq}^2)_0\partial_{y}c_0n_0\partial_{y}n_0^2dy\\
				&+\dfrac{12}{A}\int_{\mathbb{R}}(n_{\neq}n_0\partial_{y}c_{\neq})_0n_0\partial_{y}n_0^2dy+\dfrac{6}{A}\int_{\mathbb{R}}((n^2_{\neq})_{\neq}\partial_yc_{\neq})_0n_0\partial_yn_0^2dy\\&+\frac{6}{A}\int_{\mathbb{R}}(n_{\neq}\nabla c_{\neq})_{0}n_{0}\nabla n_{0}^{2}dy+\frac{4}{A}\int_{\mathbb{R}}n_{0}\partial_{y}c_{0}\partial_{y}n_{0}^{3}dy\\
				\leq&\dfrac{1}{A}\|\partial_yn_0^2\|_{L^2}^2+\dfrac{45}{A}\|(v_{\neq}n_{\neq})_0n_0\|_{L^2}^2+\frac{45}{A}\|(n^2_{\neq})_0\partial_{y}c_0n_0\|_{L^{2}}^{2}\\
				&+\dfrac{180}{A}\|(n_{\neq}n_0\partial_{y}c_{\neq})_0n_0\|_{L^2}^2+\dfrac{45}{A}\|((n_{\neq}^2)_{\neq}\partial_{y}c_{\neq})_0n_0\|_{L^2}^2+\frac{45}{A}\|(n_{\neq}\nabla c_{\neq})_{0}n_{0}\|_{L^{2}}^{2}\\&+\dfrac{4}{A}\int_{\mathbb{R}}n^2_0\partial_yc_0\partial_yn_0^3dy+\frac{4}{A}\int_{\mathbb{R}}n_{0}\partial_{y}c_{0}\partial_{y}n_{0}^{3}dy.
			\end{aligned}
		\end{equation}
		Similarly as (\ref{n32}), we find that
		\begin{equation}\label{energy n065}
			\begin{aligned}
				&	\dfrac{4}{A}\int_{\mathbb{R}}n^2_0\partial_yc_0\partial_yn_0^3dy+\frac{4}{A}\int_{\mathbb{R}}n_{0}\partial_{y}c_{0}\partial_{y}n_{0}^{3}dy\\=&-\dfrac{12}{5A}\int_{\mathbb{R}}n_0^5\partial_{yy}c_0dy-\frac{3}{A}\int_{\mathbb{R}}n_{0}^{4}\partial_{yy}c_{0}dy\\
				=&\dfrac{12}{5A}\int_{\mathbb{R}}n_0^6dy-\dfrac{12}{5A}\int_{\mathbb{R}}c_0n_0^5dy+\frac{3}{A}\int_{\mathbb{R}}n_{0}^{5}dy-\frac{3}{A}\int_{\mathbb{R}}c_{0}n_{0}^{4}dy\\:=&J_{1}+J_{2}+J_{3}+J_{4}.
			\end{aligned}
		\end{equation}
		Applying the Gagliardo-Nirenberg inequality, Young's inequality and {\textbf{Lemma \ref{1}}}, there hold
		\begin{equation*}
			\begin{aligned}
				J_{1}=\dfrac{12}{5A}\|n_0^2\|_{L^3}^3\leq\dfrac{C}{A}\|n_0^2\|_{L^2}^{\frac52}\|\partial_{y}n_0^2\|_{L^2}^{\frac12}\leq\dfrac{1}{4A}\|\partial_{y}n_0^2\|_{L^2}^2+\dfrac{C}{A}\|n_0^2\|_{L^2}^{\frac{10}{3}},
			\end{aligned}
		\end{equation*}	
		\begin{equation*}
			\begin{aligned}
				J_{2}&\leq\dfrac{12}{5A}\|c_0\|_{L^\infty}\|n_0^2\|_{L^{\frac52}}^{\frac52}\leq\dfrac{C}{A}\|n_0\|_{L^2}\|n_0^2\|_{L^2}^{\frac94}\|\partial_{y}n_0^2\|_{L^2}^{\frac14}\\
				&\leq\dfrac{1}{4A}\|\partial_{y}n_0^2\|_{L^2}^2+\dfrac{C}{A}\|n_0\|_{L^\infty L^2}^{\frac87}\|n_0^2\|_{L^2}^{\frac{18}{7}},
			\end{aligned}
		\end{equation*}
		\begin{equation*}
			J_{3}\leq\frac{C}{A}\|n_{0}^{2}\|_{L^{2}}^{\frac94}\|\partial_{y}n_{0}^{2}\|_{L^{2}}^{\frac14}\leq\frac{1}{4A}\|\partial_{y}n_{0}^{2}\|_{L^{2}}^{2}+\frac{C}{A}\|n_{0}^{2}\|_{L^{2}}^{\frac{18}{7}},
		\end{equation*}
		and
		\begin{equation*}
			J_{4}\leq \frac{3}{A}\|c_{0}\|_{L^{\infty}}\|n_{0}^{2}\|_{L^{2}}^{2}\leq\frac{C}{A}\|n_{0}\|_{L^{\infty}L^{2}}\|n_{0}^{2}\|_{L^{2}}^{2}.
		\end{equation*}
		Combing with $J_{1}-J_4$, (\ref{energy n065}) follows that
		\begin{equation}\label{energy n0xin}
			\begin{aligned}
				&\dfrac{4}{A}\int_{\mathbb{R}}n^2_0\partial_yc_0\partial_yn_0^3dy+\frac{4}{A}\int_{\mathbb{R}}n_{0}\partial_{y}c_{0}\partial_{y}n_{0}^{3}dy\\\leq&\dfrac{1}{A}\|\partial_{y}n_0^2\|_{L^2}^2+\dfrac{C}{A}\|n_0^2\|_{L^2}^{\frac{10}{3}}+\dfrac{C}{A}(H_1^{\frac87}+1)\|n_0^2\|_{L^2}^{\frac{18}{7}}+\frac{C}{A}H_{1}\|n_{0}^{2}\|_{L^{2}}^{2}.
			\end{aligned}
		\end{equation}  
		Due to the Nash inequality
		\begin{equation*}
			-\|\partial_{y}n_0^2\|_{L^2}^2\leq-C\dfrac{\|n_0^{2}\|_{L^2}^6}{\|n_0^2\|_{L^1}^4}\leq-C\dfrac{\|n_0^2\|_{L^2}^6}{H_1^8},
		\end{equation*} 
		and (\ref{energy n0xin}), we get from  (\ref{energy n0l4}) that
		\begin{equation}\label{energy n04}
			\begin{aligned}   
				&\dfrac{d}{dt}\| n_0\|_{L^4}^4\\
				\leq&-\dfrac{C}{AH_1^8}\|n_0^2\|_{L^2}^6+\dfrac{C}{A}\|n_0^2\|_{L^2}^{\frac{10}{3}}+\dfrac{C}{A}(H_1^{\frac87}+1)\|n_0^2\|_{L^2}^{\frac{18}{7}}+\frac{C}{A}H_{1}\|n_{0}^{2}\|_{L^{2}}^{2}\\&+\dfrac{C}{A}\|(v_{\neq}n_{\neq})_0n_0\|_{L^2}^2+\frac{C}{A}\|(n^2_{\neq})_0\partial_{y}c_0n_0\|_{L^{2}}^{2}+\dfrac{C}{A}\|(n_{\neq}n_0\partial_{y}c_{\neq})_0n_0\|_{L^2}^2\\
				&+\dfrac{C}{A}\|((n_{\neq}^2)_{\neq}\partial_{y}c_{\neq})_0n_0\|_{L^2}^2+\frac{C}{A}\|(n_{\neq}\nabla c_{\neq})_{0}n_{0}\|_{L^{2}}^{2}\\
				\leq&-\dfrac{C\|n_0^2\|_{L^2}^{2}}{AH_1^8}\left(\|n_0^2\|_{L^2}^{4}-H_1^8\|n_0^2\|_{L^2}^{\frac{4}{3}}-(H_1^{\frac{64}{7}}+H_{1}^{8})\|n_{0}^{2}\|_{L^{2}}^{\frac47}-H_{1}^{9}\right)\\&+\dfrac{C}{A}\|(v_{\neq}n_{\neq})_0n_0\|_{L^2}^2+\frac{C}{A}\|(n^2_{\neq})_0\partial_{y}c_0n_0\|_{L^{2}}^{2}+\dfrac{C}{A}\|(n_{\neq}n_0\partial_{y}c_{\neq})_0n_0\|_{L^2}^2\\&+\dfrac{C}{A}\|((n_{\neq}^2)_{\neq}\partial_{y}c_{\neq})_0n_0\|_{L^2}^2+\frac{C}{A}\|(n_{\neq}\nabla c_{\neq})_{0}n_{0}\|_{L^{2}}^{2}\\
				\leq&-\dfrac{C\|n_0^2\|_{L^2}^{2}}{AH_1^8}\left(\|n_0^2\|_{L^2}^{4}-H_1^{12}-1\right)+\dfrac{C}{A}\|(v_{\neq}n_{\neq})_0n_0\|_{L^2}^2\\
				&+\frac{C}{A}\|(n^2_{\neq})_0\partial_{y}c_0n_0\|_{L^{2}}^{2}+\dfrac{C}{A}\|(n_{\neq}n_0\partial_{y}c_{\neq})_0n_0\|_{L^2}^2\\&+\dfrac{C}{A}\|((n_{\neq}^2)_{\neq}\partial_{y}c_{\neq})_0n_0\|_{L^2}^2+\frac{C}{A}\|(n_{\neq}\nabla c_{\neq})_{0}n_{0}\|_{L^{2}}^{2}.    
			\end{aligned}
		\end{equation}   
		Denote
		\begin{equation*}
			\begin{aligned}  
				M(t):=&\frac{C}{A}\int_{0}^{t}\bigg(\|(v_{\neq}n_{\neq})_0n_0\|_{L^2}^2+\|(n^2_{\neq})_0\partial_{y}c_0n_0\|_{L^{2}}^{2}+\|(n_{\neq}n_0\partial_{y}c_{\neq})_0n_0\|_{L^2}^2\\
				&+\|((n_{\neq}^2)_{\neq}\partial_{y}c_{\neq})_0n_0\|_{L^2}^2+\|(n_{\neq}\nabla c_{\neq})_{0}n_{0}\|_{L^{2}}^{2} \bigg)ds,~~\forall t\geq0.
			\end{aligned}
		\end{equation*}
		It follows from assumptions (\ref{assumption_0})-(\ref{assumption_1}), (\ref{n0 infty 2}), {\textbf{Lemma \ref{ellip_0}}} and {\textbf{Lemma \ref{ellip_2}}} that
		\begin{equation*}
			\begin{aligned}
				M(t)\leq&\dfrac{C}{A}\left(\|v_{\neq}\|_{L^2L^2}^2\|n_{\neq}\|_{L^{\infty}L^{\infty}}^{2}\|n_0\|_{L^\infty L^\infty}^2+\|n_{\neq}\|_{L^2L^2}^2\|n_{\neq}\|_{L^{\infty}L^{\infty}}^{2}\|n_0\partial_{y}c_0\|_{L^\infty L^\infty}^2\right)\\
				&+\dfrac{C}{A}\left(\|\partial_{y}c_{\neq}\|_{L^2L^2}^2\|n_{\neq}n_0\|_{L^\infty L^\infty}^2\|n_{0}\|_{L^{\infty}L^{\infty}}^{2}+\|\partial_{y}c_{\neq}\|_{L^2L^2}^2\|n_{\neq}^2\|_{L^\infty L^\infty}^2\|n_{0}\|_{L^{\infty}L^{\infty}}^{2}\right)\\&+\frac{C}{A}\|n_{\neq}\|_{L^{\infty}L^{\infty}}^{2}\|\nabla c_{\neq}\|_{L^{2}L^{2}}^{2}\|n_{0}\|_{L^{\infty}L^{\infty}}^{2}\\
				\leq&\dfrac{C}{A}\left(\|\omega_{\neq}\|_{L^2L^2}^2\|n\|_{L^\infty L^\infty}^4+\|n_{\neq}\|_{L^2L^2}^2\|n\|_{L^\infty L^\infty}^4\|n_0\|_{L^\infty L^2}^2\right)\\
				&+\dfrac{C}{A}\left(\|n_{\neq}\|_{L^2L^2}^2\|n\|_{L^\infty L^\infty}^6+\|n_{\neq}\|_{L^{2}L^{2}}^{2}\|n\|_{L^{\infty}L^{\infty}}^{4} \right)\\
				\leq&\dfrac{C}{A^{\frac23}}\left(\|\omega_{\neq}\|_{X_a}^2K_{\infty}^4+\|n_{\neq}\|_{X_a}^2MK_{\infty}^5+\|n_{\neq}\|_{X_a}^2K_{\infty}^6+\|n_{\neq}\|_{X_{a}}^{2}K_{\infty}^{4}\right)\\
				\leq&\dfrac{C}{A^{\frac23}}\left(K_{\neq}^2K_{\infty}^4+MK_{\neq}^2K_{\infty}^5+K_{\neq}^2K_{\infty}^6\right)\\
				\leq&C,
			\end{aligned}
		\end{equation*}
		provided that
		\begin{equation*}
			A\geq\left(K_{\neq}^2K_{\infty}^4+MK_{\neq}^2K_{\infty}^5+K_{\neq}^2K_{\infty}^6\right)^{\frac32}.
		\end{equation*}
		Hence (\ref{energy n04}) can be rewritten as follows
		\begin{equation*}
			\dfrac{d}{dt}\left(\| n_0\|_{L^4}^4 -M(t)\right)  \leq-\dfrac{C\|n_0^2\|_{L^2}^{2}}{AH_1^8}\left(\|n_0^2\|_{L^2}^{4}-H_1^{12}-1\right),
		\end{equation*} 
		which implies that
		\begin{equation*}  
			\|n_0\|_{L^\infty L^4}\leq C\left(\|n_{\rm in,0}\|_{L^4}+H_1^{\frac32}+1\right),
		\end{equation*} 
		this gives (\ref{n0 4}).
		
		\underline{\textbf{III. Estimate of $ \omega_{0}. $}} As $ \omega_{0} $ satisfies
		\begin{equation*}
			\partial_{t}\omega_{0}-\frac{1}{A}\partial_{yy}\omega_{0}=-\frac{1}{A}\partial_{y}(v_{2,\neq}\omega_{\neq})_{0},
		\end{equation*}
		multiplying it by $2\omega_
		0$ and integrating with $ y $ over $ \mathbb{R} $, we obtain
		\begin{equation*}
			\begin{aligned}
				\frac{d}{dt}\|\omega_{0}\|_{L^{2}}^{2}+\frac{2}{A}\|\partial_{y}\omega_{0}\|_{L^{2}}^{2}=&\frac{2}{A}\int_{\mathbb{R}}(v_{2,\neq}\omega_{\neq})_{0}\partial_{y}\omega_{0}dy\\\leq&\frac{1}{A}\|\partial_{y}\omega_{0}\|_{L^{2}}^{2}+\frac{C}{A}\|v_{2,\neq}\omega_{\neq}\|_{L^{2}}^{2},
			\end{aligned}
		\end{equation*}
		which follows that
		\begin{equation}\label{energy omega 0}
			\|\omega_{0}\|_{L^{\infty}L^{2}}^{2}+\frac{1}{A}\|\partial_{y}\omega_{0}\|_{L^{2}L^{2}}^{2}\leq \|\omega_{\rm in,0}\|_{L^{2}}^{2}+\frac{C}{A}\|v_{2,\neq}\|_{L^{2}L^{\infty}}^{2}\|\omega_{\neq}\|_{L^{\infty}L^{2}}^{2}.
		\end{equation}
		Using \textbf{Lemma \ref{lem: non-zero}}, if $ A\geq K_{\neq}^{8}, $ (\ref{energy omega 0}) implies that
		\begin{equation*}
			\|\omega_{0}\|_{L^{\infty}L^{2}}^{2}+\frac{1}{A}\|\partial_{y}\omega_{0}\|_{L^{2}L^{2}}^{2}\leq C\left(\|\omega_{\rm in,0}\|_{L^{2}}^{2}+\frac{\|\omega_{\neq}\|_{X_{a}}^{4}}{A^{\frac12}} \right)\leq C\left(\|\omega_{\rm in,0}\|_{L^{2}}^{2}+1 \right).
		\end{equation*}
		This gives (\ref{omega 0}).

		\underline{\textbf{IV. Estimate of $ v_{1,0}. $}} From (\ref{v0}), we find $ v_{1,0} $ satisfies
		$$
		\partial_tv_{1,0}-\frac{1}{A}\partial_{yy}v_{1,0}=-\frac{1}{A}\partial_y(v_{2,\neq}v_{1,\neq})_0,
		$$
		and the basic energy estimates indicate that
		\begin{equation*}
			\begin{aligned}
				\frac{d}{dt}\|v_{1,0}\|_{L^{2}}^{2}+\frac{2}{A}\|\partial_{y}v_{1,0}\|_{L^{2}}^{2}=&\frac{2}{A}\int_{\mathbb{R}}(v_{2,\neq}v_{1,\neq})_{0}\partial_{y}v_{1,0}dy\\\leq&\frac{1}{A}\|\partial_{y}v_{1,0}\|_{L^{2}}^{2}+\frac{1}{A}\|(v_{2,\neq}v_{1,\neq})_{0}\|_{L^{2}}^{2}.
			\end{aligned}
		\end{equation*}
		Integrating the above inequality with $ t $, we get
		\begin{equation}\label{energy v10}
			\|v_{1,0}\|_{L^{\infty}L^{2}}^{2}+\frac{1}{A}\|\partial_{y}v_{1,0}\|_{L^{2}L^{2}}^{2}\leq \|v_{1,0}(0)\|_{L^{2}}^{2}+\frac{C}{A}\|v_{1,\neq}\|_{L^{2}L^{\infty}}^{2}\|v_{2,\neq}\|_{L^{\infty}L^{2}}^{2}.
		\end{equation}
		Using \textbf{Lemma \ref{lemma_0}} and \textbf{Lemma \ref{lem: non-zero}}, we have
		\begin{equation*}
			\|v_{2,\neq}\|_{L^{\infty}L^{2}}\leq C\|\partial_{x}v_{2,\neq}\|_{L^{\infty}L^{2}}\leq C\|\omega_{\neq}\|_{L^{\infty}L^{2}}\leq C\|\omega_{\neq}\|_{X_{a}},
		\end{equation*}
		and 
		$
		\|v_{1,\neq}\|_{L^{2}L^{\infty}}\leq CA^{\frac14}\|\omega_{\neq}\|_{X_{a}}.
		$
		If $ A\geq K_{\neq}^{8}, $ then (\ref{energy v10}) yields that
		\begin{equation}\label{energy v10 1}
			\|v_{1,0}\|_{L^{\infty}L^{2}}^{2}+\frac{1}{A}\|\partial_{y}v_{1,0}\|_{L^{2}L^{2}}^{2}\leq \|v_{1,0}(0)\|_{L^{2}}^{2}+\frac{C}{A^{\frac12}}\|\omega_{\neq}\|_{X_{a}}^{4}\leq C\left(\|v_{\rm in,0}\|_{L^{2}}^{2}+1 \right).
		\end{equation}
		Due to $ \omega_{0}=\partial_{y}v_{1,0}, $ using Gagliardo-Nirenberg inequality, there holds
		\begin{equation*}
			\|v_{1,0}\|_{L^{\infty}L^{\infty}}\leq C\|v_{1,0}\|_{L^{\infty}L^{2}}^{\frac12}\|\partial_{y}v_{1,0}\|_{L^{\infty}L^{2}}^{\frac12}=C\|v_{1,0}\|_{L^{\infty}L^{2}}^{\frac12}\|\omega_{0}\|_{L^{\infty}L^{2}}^{\frac12}.
		\end{equation*}
		Combining this with (\ref{omega 0}) and (\ref{energy v10 1}), we obtain
		\begin{equation*}
			\|v_{1,0}\|_{L^{\infty}L^{\infty}}\leq C\left(\|v_{\rm in,0}\|_{L^{2}}+\|\omega_{\rm in,0}\|_{L^{2}}+1 \right).
		\end{equation*}
		This proves (\ref{v01}).

	\end{proof}	
	\section{The estimate of $ E(t) $ and  proof of Proposition \ref{prop 1}}\label{sec 4}
	\begin{proof}[Proof of Proposition \ref{prop 1}]
		\noindent\textbf{\underline{I. Estimate $ \|\omega_{\neq}\|_{X_{a}} $ }.}
		Applying \textbf{Lemma \ref{lem:space-time}} to $(\ref{eq:non-zero mode})_{3},$ and using (\ref{assumption_0}), \textbf{Lemma \ref{lem: non-zero}} and \textbf{Lemma \ref{lem: zero mode}}, we get
		\begin{equation}\label{omega xa}
			\begin{aligned}
				\|\omega_{\neq}\|_{X_a}\leq& C\|\omega_{\rm in,\neq}\|_{L^2}+\dfrac{C}{A}\| e^{aA^{-\frac{1}{3}}t}\nabla n_{\neq}\|_{L^2L^2}+\dfrac{C}{A^{\frac{1}{2}}}\| e^{aA^{-\frac{1}{3}}t}v_{1,0}\omega_{\neq}\|_{L^2L^2}\\
				&+\frac{C}{A^{\frac12}}\left(\| e^{aA^{-\frac{1}{3}}t}v_{\neq}\omega_0\|_{L^2L^2}+\| e^{aA^{-\frac{1}{3}}t}(v_{\neq}\omega_{\neq})_{\neq}\|_{L^2L^2}\right)\\
				\leq& C\|\omega_{\rm in,\neq}\|_{L^2}+\dfrac{C}{A}\| e^{aA^{-\frac{1}{3}}t}\nabla n_{\neq}\|_{L^2L^2}+\dfrac{C}{A^{\frac{1}{2}}}\|e^{aA^{-\frac{1}{3}}t}\omega_{\neq}\|_{L^2L^2}\|v_{1,0}\|_{L^\infty L^\infty}\\
				&+\frac{C}{A^{\frac12}}\left(\| e^{aA^{-\frac{1}{3}}t}v_{\neq}\|_{L^2L^\infty}\|\omega_0\|_{L^\infty L^2}+\| e^{aA^{-\frac{1}{3}}t}v_{\neq}\|_{L^2L^\infty}\|\omega_{\neq}\|_{L^\infty L^2}\right)\\\leq&C\left(\|\omega_{\rm in,\neq}\|_{L^{2}}+\frac{1}{A^{\frac12}}\|n_{\neq}\|_{X_{a}}+\frac{H_{4}}{A^{\frac13}}\|\omega_{\neq}\|_{X_{a}}+\frac{H_{3}+\|\omega_{\neq}\|_{X_{a}}}{A^{\frac14}}\|\omega_{\neq}\|_{X_{a}} \right)\\\leq&C\left(\|\omega_{\rm in,\neq}\|_{L^{2}}+\frac{H_{3}^{2}+H_{4}^{2}+K_{\neq}^{2}+1}{A^{\frac14}} \right).
			\end{aligned}
		\end{equation}

		\noindent\textbf{~\underline{II. Estimate $ \|n_{\neq}\|_{X_{a}} $.}}
		Applying \textbf{Lemma \ref{lem:space-time}} to $(\ref{eq:non-zero mode})_{1}$, one deduces
		\begin{equation}\label{n Xa}
			\begin{aligned}
				\|n_{\neq}\|_{X_a}\leq&C\|n_{\rm in,\neq}\|_{L^2}+\frac{C}{A^{\frac12}}\left(\|e^{aA^{-\frac13}t}n_{\neq}n_{0}\partial_{y}c_{0}\|_{L^{2}L^{2}}+\|e^{aA^{-\frac13}t}(n_{\neq}^{2})_{\neq}\partial_{y}c_{0}\|_{L^{2}L^{2}} \right)\\&+\frac{C}{A^{\frac12}}\left(\|e^{aA^{-\frac13}t}n_{0}^{2}\nabla c_{\neq}\|_{L^{2}L^{2}}+\|e^{aA^{-\frac13}t}(n_{\neq}^{2})_{0}\nabla c_{\neq}\|_{L^{2}L^{2}} \right)\\&+\frac{C}{A^{\frac12}}\left(\|e^{aA^{-\frac13}t}(n_{\neq}^{2})_{\neq}\nabla c_{\neq}\|_{L^{2}L^{2}}+\|e^{aA^{-\frac13}t}n_{\neq}n_{0}\nabla c_{\neq}\|_{L^{2}L^{2}} \right)\\
				&+\frac{C}{A^{\frac12}}\left(\| e^{aA^{-\frac{1}{3}}t}v_{1,0}n_{\neq}\|_{L^2L^2}+\| e^{aA^{-\frac{1}{3}}t}v_{\neq}n_0\|_{L^2L^2}\right)\\
				&+\frac{C}{A^{\frac12}}\left(\| e^{aA^{-\frac{1}{3}}t}(v_{\neq}n_{\neq})_{\neq}\|_{L^2L^2}+\|e^{aA^{-\frac{1}{3}}t}n_0\nabla c_{\neq}\|_{L^2L^2}\right)\\
				&+\frac{C}{A^{\frac12}}\left(\|e^{aA^{-\frac{1}{3}}t}n_{\neq}\partial_{y}c_0\|_{L^2L^2}+\|e^{aA^{-\frac{1}{3}}t}(n_{\neq}\nabla c_{\neq})_{\neq}\|_{L^2L^2}\right).
	\end{aligned}	
		\end{equation}
Using (\ref{n0 infty 2}), assumptions (\ref{assumption_0})-(\ref{assumption_1}), \textbf{Lemma \ref{ellip_0}}, \textbf{Lemma \ref{ellip_2}}, \textbf{Lemma \ref{lem: non-zero}} and \textbf{Lemma \ref{lem: zero mode}}, it follows from (\ref{n Xa}) that
\begin{equation*}
			\begin{aligned}
				\|n_{\neq}\|_{X_a} \leq& C\|n_{\rm in,\neq}\|_{L^{2}}+\frac{C}{A^{\frac{1}{2}}}\|e^{aA^{-\frac13}t}n_{\neq}\|_{L^{2}L^{2}}\left(\|n_{0}\|_{L^{\infty}L^{\infty}}+\|n_{\neq}\|_{L^{\infty}L^{\infty}}\right)\|\partial_{y}c_{0}\|_{{L^{\infty}L^{\infty}}}\\
				&+\frac{C}{A^{\frac12}}\|e^{aA^{-\frac13}t}\nabla c_{\neq}\|_{L^2L^2}\left(\|n_{\neq}\|_{L^\infty L^\infty}^{2}+\|n_0\|_{L^\infty L^\infty}^{2}\right)\\&+\frac{C}{A^{\frac12}}\|v_{1,0}\|_{L^{\infty}L^{\infty}}\|e^{aA^{-\frac13}t}n_{\neq}\|_{L^{2}L^{2}}+\frac{C}{A^{\frac12}}\left(\|n_{0}\|_{L^{\infty}L^{2}}+\|n_{\neq}\|_{L^{\infty}L^{2}} \right)\|e^{aA^{-\frac13}t}v_{\neq}\|_{L^{2}L^{\infty}}\\
				&+\frac{C}{A^{\frac12}}\left(\|n_{0}\|_{L^{\infty}L^{\infty}}+\|n_{\neq}\|_{L^{\infty}L^{\infty}} \right)\|e^{aA^{-\frac13}t}\nabla c_{\neq}\|_{L^{2}L^{2}}+\frac{C}{A^{\frac12}}\|e^{aA^{-\frac13}t}n_{\neq}\|_{L^2L^2}\|\partial_{y}c_0\|_{L^\infty L^\infty}\\
				\leq&C\|n_{\rm in,\neq}\|_{L^{2}}+\dfrac{C}{A^{\frac13}}\left(M^{\frac12}K_{\infty}^{\frac32}+K_{\infty}^{2}+H_{4}+K_{\infty}+M^{\frac12}K_{\infty}^{\frac12}\right)\|n_{\neq}\|_{X_a}\\
				&+\dfrac{C}{A^{\frac14}}\left(M^{\frac12}K_{\infty}^{\frac12}+K_{\neq}\right)\|\omega_{\neq}\|_{X_a}\\
				\leq&C\left(\|n_{\rm in,\neq}\|_{L^{2}}+\dfrac{K_{\infty}^4+K_{\neq}^2+M^4+H_4^2+1}{A^{\frac14}}\right).
			\end{aligned}	
		\end{equation*}


		To sum up, we conclude that
		\begin{equation}\label{E(t)}
			E(t)\leq C\left(E_{\rm in}+\frac{K_{\infty}^{4}+K_{\neq}^{2}+M^{4}+H_{3}^{2}+H_{4}^{2}+1}{A^{\frac14}} \right),
		\end{equation}
		where
		\begin{equation*}
			\begin{aligned}
				E_{\rm in}=\|\omega_{\rm in,\neq}\|_{L^{2}}+\|n_{\rm in,\neq}\|_{L^{2}}.
			\end{aligned}
		\end{equation*}
		Let us denote $ A_{2}:=\left(K_{\infty}^{4}+K_{\neq}^{2}+M^{4}+H_{3}^{2}+H_{4}^{2}+1 \right)^{4}. $ Thus if $ A\geq A_{2}, $ (\ref{E(t)}) implies that
		\begin{equation*}
			E(t)\leq C\left(E_{\rm in}+1 \right):=K_{\neq}.
		\end{equation*}
		
		We complete the proof.
		
	\end{proof}

	\section{The $L^\infty $ estimate of the density and proof of Proposition \ref{prop 2}}\label{sec 6}
	\begin{proof}[Proof of Propsition \ref{prop 2}]
		We use mathematical induction to obtain the $ L^{\infty} $ estimate of the density. Firstly, from (\ref{assumption_0}) and (\ref{n0}), it is easy to know that
		\begin{equation}\label{n2 result}
			\|n\|_{L^{\infty}L^{2}}\leq \|n_{0}\|_{L^{\infty}L^{2}}+\|n_{\neq}\|_{L^{\infty}L^{2}}\leq H_{1}+2K_{\neq}:=D_{2}.
		\end{equation}
		
		\textbf{Step I. Estimate $ \|n\|_{L^{\infty}L^{4}}. $} It follows from \textbf{Lemma \ref{lem: zero mode}} that 
		\begin{equation}\label{no infty 4}
			\|n_{0}\|_{L^{\infty}L^{4}}\leq H_{2},
		\end{equation}
		then we only need to estimate $ \|n_{\neq}\|_{L^{\infty}L^{4}}. $
		Multiplying $(\ref{eq:non-zero mode})_1$  by $4n_{\neq}^3$ and integrating the resulting equation over $\mathbb{R}$, noting that $ \nabla\cdot v_{0}=0, $ there holds 
		\begin{equation}\label{n4}
			\begin{aligned}
				&\dfrac{d}{dt}\|n_{\neq}^2\|_{L^2}^2+\dfrac{3}{A}\|\nabla n_{\neq}^2\|_{L^2}^2\\=&\dfrac{6}{A}\int_{\mathbb{R}}n_{\neq}v_{\neq}n_0\nabla n_{\neq}^2dy+\dfrac{6}{A}\int_{\mathbb{R}}n_{\neq}(v_{\neq}n_{\neq})_{\neq}\nabla n_{\neq}^2dy+\dfrac{12}{A}\int_{\mathbb{R}}n_{\neq}^2n_0\nabla c_0\cdot\nabla n_{\neq}^2dy\\
				&+\dfrac{6}{A}\int_{\mathbb{R}}(n_{\neq}^2)_{\neq}n_{\neq}\nabla c_0\cdot\nabla n_{\neq}^2dy+\dfrac{6}{A}\int_{\mathbb{R}}n_{\neq}n_0^2\nabla c_{\neq}\cdot\nabla n_{\neq}^2dy+\dfrac{6}{A}\int_{\mathbb{R}}n_{\neq}(n_{\neq}^2)_0\nabla c_{\neq}\cdot\nabla n_{\neq}^2dy\\
				&+\dfrac{6}{A}\int_{\mathbb{R}}n_{\neq}((n_{\neq}^2)_{\neq}\nabla c_{\neq})_{\neq}\nabla n_{\neq}^2dy+\dfrac{12}{A}\int_{\mathbb{R}}n_{\neq}(n_{\neq}n_0\nabla c_{\neq})_{\neq}\nabla n_{\neq}^2dy\\
				&+\dfrac{6}{A}\int_{\mathbb{R}}n_{\neq}n_0\nabla c_{\neq}\cdot\nabla n_{\neq}^2dy+\dfrac{6}{A}\int_{\mathbb{R}}n_{\neq}^2\nabla c_0\cdot\nabla n_{\neq}^2dy+\dfrac{6}{A}\int_{\mathbb{R}}n_{\neq}(n_{\neq}\nabla c_{\neq})_{\neq}\nabla n_{\neq}^2dy\\
				\leq&\dfrac{2}{A}\|\nabla n_{\neq}^2\|_{L^2}^2+\dfrac{C}{A}\left(\|n_{\neq}v_{\neq}n_0\|_{L^2}^2+\|n_{\neq}(v_{\neq}n_{\neq})_{\neq}\|_{L^2}^2+\|n_{\neq}^2n_0\partial_{y}c_0\|_{L^2}^2\right)\\
				&+\dfrac{C}{A}\left(\|(n_{\neq}^2)_{\neq}n_{\neq}\partial_{y}c_0\|_{L^2}^2+\|n_{\neq}n_0^2\nabla c_{\neq}\|_{L^2}^2+\|n_{\neq}(n_{\neq}^2)_0\nabla c_{\neq}\|_{L^2}^2\right)\\
				&+\dfrac{C}{A}\left(\|n_{\neq}((n_{\neq}^2)_{\neq}\nabla c_{\neq})_{\neq}\|_{^2}+\|n_{\neq}(n_{\neq}n_0\nabla c_{\neq})_{\neq}\|_{L^2}^2+\|n_{\neq}n_0\nabla c_{\neq}\|_{L^2}^2\right)\\
				&+\dfrac{C}{A}\left(\|n_{\neq}^2\partial_{y}c_0\|_{L^2}^2+\|n_{\neq}(n_{\neq}\nabla c_{\neq})_{\neq}\|_{L^2}^2\right).
			\end{aligned}
		\end{equation}
		For any $ t\geq 0, $  denote	
		\begin{equation*}
			\begin{aligned}
				N(t):=&\frac{C}{A}\int_{0}^{t}\bigg(\|n_{\neq}v_{\neq}n_0\|_{L^2}^2+\|n_{\neq}(v_{\neq}n_{\neq})_{\neq}\|_{L^2}^2+\|n_{\neq}^2n_0\partial_{y}c_0\|_{L^2}^2\\
				&+\|(n_{\neq}^2)_{\neq}n_{\neq}\partial_{y}c_0\|_{L^2}^2+\|n_{\neq}n_0^2\nabla c_{\neq}\|_{L^2}^2+\|n_{\neq}(n_{\neq}^2)_0\nabla c_{\neq}\|_{L^2}^2\\
				&+\|n_{\neq}((n_{\neq}^2)_{\neq}\nabla c_{\neq})_{\neq}\|_{L^2}+\|n_{\neq}(n_{\neq}n_0\nabla c_{\neq})_{\neq}\|_{L^2}^2+\|n_{\neq}n_0\nabla c_{\neq}|_{L^2}^2\\
				&+\|n_{\neq}^2\partial_{y}c_0\|_{L^2}^2+\|n_{\neq}(n_{\neq}\nabla c_{\neq})_{\neq}\|_{L^2}^2\bigg)ds,
			\end{aligned}
		\end{equation*}
		then using the H$\ddot{\mathrm{o}}$lder's inequality, assumptions (\ref{assumption_0})-(\ref{assumption_1}), (\ref{n0 infty 2}), {\textbf{Lemma \ref{ellip_0}}} and {\textbf{Lemma \ref{ellip_2}}}, we get
		\begin{equation*}
			\begin{aligned}
				N(t)\leq&\dfrac{C}{A}\|v_{\neq}\|_{L^2L^2}^2\|n\|_{L^\infty L^\infty}^4+\dfrac{C}{A}\|n_{\neq}\|_{L^2L^2}^2\|n\|_{L^{\infty}L^{\infty}}^{4}\|\partial_{y}c_{0}\|_{L^{\infty}L^{\infty}}^{2}\\
				&+\dfrac{C}{A}\|\nabla c_{\neq}\|_{L^{2}L^{2}}^{2}\left(\|n\|_{L^{\infty}L^{\infty}}^{6}+\|n\|_{L^{\infty}L^{\infty}}^{4} \right)+\frac{C}{A}\|n_{\neq}\|_{L^{2}L^{2}}^{2}\|n\|_{L^{\infty}L^{\infty}}^{2}\|\partial_{y}c_{0}\|_{L^{\infty}L^{\infty}}^{2}\\
				\leq&\dfrac{C}{A^{\frac23}}\|\omega_{\neq}\|_{X_a}^2K_{\infty}^4+\dfrac{C}{A^{\frac23}}\|n_{\neq}\|_{X_a}^2\left(MK_{\infty}^5+K_{\infty}^6+K_{\infty}^4+MK_{\infty}^3\right)\\
				\leq&\dfrac{C}{A^{\frac23}}\left(K_{\neq}^2K_{\infty}^4+MK_{\neq}^2K_{\infty}^3+MK_{\neq}^2K_{\infty}^5+K_{\neq}^2K_{\infty}^6\right)
				\leq C,
			\end{aligned}
		\end{equation*}
		provided that
		\begin{equation*}
			A\geq\left(K_{\neq}^2K_{\infty}^4+MK_{\neq}^2K_{\infty}^3+MK_{\neq}^2K_{\infty}^5+K_{\neq}^2K_{\infty}^6\right)^{\frac32}.
		\end{equation*}
		Hence (\ref{n4}) implies that
		\begin{equation}\label{n4 4}
			\dfrac{d}{dt}\left(\|n_{\neq}^2\|_{L^2}^2-N(t)\right)\leq-\dfrac{1}{A}\|\nabla n_{\neq}^2\|_{L^2}^2\leq 0.
		\end{equation}
		By integrating both sides of (\ref{n4 4}) with $ t $, we obtain
		\begin{equation}\label{n4 result}
			\|n_{\neq}\|_{L^{\infty}L^{4}}\leq C\left(\|n_{\rm in}\|_{L^{4}}+1 \right):=H_{5}.
		\end{equation}
		Combining the estimates of (\ref{n0 4}) and (\ref{n4 result}), one deduces
		\begin{equation}\label{D4}
			\|n\|_{L^\infty L^4}\leq\|n_0\|_{L^\infty L^4}+\|n_{\neq}\|_{L^\infty L^4}\leq H_2+H_5:=D_{4}.
		\end{equation}
		
		\textbf{Step II. Estimate $ \|n\|_{L^{\infty}L^{p}}. $}
		For $ p=2^{j} $ with $ j>2 $, multiplying $ (\ref{ini2})_{1} $ by $2pn^{2p-1}$, and integrating by parts the resulting equation over $\mathbb{T}\times\mathbb{R}$, using $ \nabla\cdot v=0 $, $ (\ref{ini2})_{2} $ and $ \|c\|_{L^{2}}\leq \|n\|_{L^{2}} $, one deduces
		\begin{equation}\label{n 2p1}
			\begin{aligned}
				&\dfrac{d}{dt}\|n^p\|_{L^2}^2+\dfrac{2(2p-1)}{pA}\|\nabla n^p\|_{L^2}^2\\
				=&\frac{2p(2p-1)}{A(2p+1)}\int_{\mathbb{T}\times\mathbb{R}}\nabla c\cdot\nabla n^{2p+1}dxdy+\dfrac{2(2p-1)}{A}\int_{\mathbb{T}\times\mathbb{R}}n^p\nabla c\cdot\nabla n^pdxdy\\
				:=&S_1+S_2.
			\end{aligned}
		\end{equation}
		For $S_1$, using $ (\ref{ini2})_{2} $, the Gagliardo-Nirenberg inequality and the Young's inequality, there holds
		\begin{equation}\label{n s1}
			\begin{aligned}
				S_1=&-\frac{2p(2p-1)}{A(2p+1)}\int_{\mathbb{T}\times\mathbb{R}}n^{2p+1}\triangle cdxdy\\=&\frac{2p(2p-1)}{A(2p+1)}\int_{\mathbb{T}\times\mathbb{R}}n^{2p+2}dxdy-\frac{2p(2p-1)}{A(2p+1)}\int_{\mathbb{T}\times\mathbb{R}}n^{2p+1}cdxdy\\
				\leq&C\dfrac{2p(2p-1)}{A(2p+1)}\left(\|n^p\|_{L^2}^2\|\nabla n^p\|_{L^2}^{\frac2p}+\|n\|_{L^2}\| n^p\|_{L^2}\|\nabla n^p\|_{L^2}^{\frac{p+1}{p}}\right)\\\leq&\frac{2p-1}{2pA}\|\nabla n^{p}\|_{L^{2}}^{2}+\frac{C(2p-1)(p-1)}{Ap}\left(\frac{p}{2p+1} \right)^{\frac{p}{p-1}}\|n^{p}\|_{L^{2}}^{\frac{2p}{2p-1}}\\&+\frac{C(p-1)(2p-1)}{Ap(p+1)}\left(\frac{p(p+1)}{2p+1} \right)^{\frac{2p}{p-1}}\|n\|_{L^{2}}^{\frac{2p}{p-1}}\|n^{p}\|_{L^{2}}^{\frac{2p}{p-1}}.
			\end{aligned}
		\end{equation}
		For $S_2$, by H$\ddot{\mathrm{o}}$lder's inequality, Young's inequality and Gagliardo-Nirenberg inequality, one obtain
		\begin{equation}\label{n s2}
			\begin{aligned}
				S_2\leq&\dfrac{2(2p-1)}{A}\|\nabla n^p\|_{L^2}\|n^p\nabla c\|_{L^2}\\
				\leq&\dfrac{2p-1}{4pA}\|\nabla n^p\|_{L^2}^2+\dfrac{2p(2p-1)}{A}\|n^p\nabla c\|_{L^2}^2\\
				\leq&\dfrac{2p-1}{4pA}\|\nabla n^p\|_{L^2}^2+\dfrac{Cp(2p-1)}{A}\|n^p\|_{L^2}\|\nabla n^{p}\|_{L^{2}}\|\nabla c\|_{L^4}^2\\
				\leq&\dfrac{2p-1}{2pA}\|\nabla n^p\|_{L^2}^2+\dfrac{Cp^3(2p-1)}{A}\|n^p\|_{L^2}^2\|\nabla c\|_{L^4}^4.
			\end{aligned}
		\end{equation}
		Combining $(\ref{n s1})$ with $(\ref{n s2})$, we get by $(\ref{n 2p1})$ that
		\begin{equation}\label{n 2p}
			\begin{aligned}
				\frac{d}{dt}\|n^{p}\|_{L^{2}}^{2}\leq&-\frac{1}{A}\|\nabla n^{p}\|_{L^{2}}^{2}+\frac{Cp}{A}\|n^{p}\|_{L^{2}}^{\frac{2p}{p-1}}+\frac{Cp^{3}}{A}\|n\|_{L^{2}}^{\frac{2p}{p-1}}\|n^{p}\|_{L^{2}}^{\frac{2p}{p-1}}\\
				&+\dfrac{Cp^4}{A}\|n^p\|_{L^2}^2\|\nabla c\|_{ L^4}^4.
			\end{aligned}
		\end{equation}
		Thanks to Nash inequality
		$$
		-\|\nabla n^p\|_{L^2}^{2}\leq-C\dfrac{\|n^p\|_{L^2}^4}{\|n^p\|_{L^1}^{2}},
		$$
		 (\ref{n 2p}) yields that
		\begin{equation*}
			\begin{aligned}
				\frac{d}{dt}\|n^{p}\|_{L^{2}}^{2}\leq& -\frac{C}{A}\frac{\|n^{p}\|_{L^{2}}^{4}}{\|n^{p}\|_{L^{1}}^{2}}+\frac{Cp}{A}\|n^{p}\|_{L^{2}}^{\frac{2p}{p-1}}+\frac{Cp^{3}}{A}\|n\|_{L^{2}}^{\frac{2p}{p-1}}\|n^{p}\|_{L^{2}}^{\frac{2p}{p-1}}\\
				&+\dfrac{Cp^4}{A}\|n^p\|_{L^2}^2\|\nabla c\|_{ L^4}^4\\
				\leq&-\frac{C\|n^p\|_{L^2}^2}{A\|n^{p}\|_{L^{1}}^{2}}\bigg(\|n^{p}\|_{L^{2}}^2-p\|n^{p}\|_{L^{1}}^{2}\|n^p\|_{L^2}^{\frac{2}{p-1}}-p^{3}\|n\|_{L^{2}}^{\frac{2p}{p-1}}\|n^p\|_{L^1}^2\|n^p\|_{L^2}^{\frac{2}{p-1}}\\
				&-p^4\|n^p\|_{L^1}^2\|\nabla c\|_{L^4}^4\bigg)\\
				\leq&-\frac{C\|n^p\|_{L^2}^2}{A\|n^{p}\|_{L^{1}}^{2}}\bigg(\|n^p\|_{L^2}^2-p^{\frac{p-1}{p-2}}\|n^p\|_{L^1}^{\frac{2(p-1)}{p-2}}-p^{\frac{3(p-1)}{p-2}}\|n\|_{L^2}^{\frac{2p}{p-2}}\|n^p\|_{L^1}^{\frac{2(p-1)}{p-2}}\\
				&-p^4\|n^p\|_{L^1}^2\|\nabla c\|_{ L^4}^4\bigg),
			\end{aligned}
		\end{equation*}
		which follows that
		\begin{equation}\label{n 2p 1}
			\begin{aligned}
				\|n^{p}\|_{L^{\infty}L^{2}}^{2}\leq& C\bigg(\|n^{p}(0)\|_{L^{2}}^{2}+p^{\frac{p-1}{p-2}}\|n^{p}\|_{L^{\infty}L^{1}}^{\frac{2(p-1)}{p-2}}+p^{\frac{3(p-1)}{p-2}}\|n\|_{L^{\infty}L^{2}}^{\frac{2p}{p-2}}\|n^{p}\|_{L^{\infty}L^{1}}^{\frac{2(p-1)}{p-2}}\\
				&+p^4\|n^p\|_{L^{\infty}L^1}^2\|\nabla c\|_{L^\infty L^4}^4\bigg).
			\end{aligned}
		\end{equation}
		Using (\ref{n2 result}) and elliptic estimation, we rewrite (\ref{n 2p 1}) into
		\begin{equation}\label{ n 2p}
			\begin{aligned}
				\|n\|_{L^{\infty}L^{2p}}^{2p}\leq&C\left(\|n_{\rm in}\|_{L^{2p}}^{2p}+p^{\frac{p-1}{p-2}}\|n\|_{L^{\infty}L^{p}}^{\frac{2p(p-1)}{p-2}}+p^{\frac{3(p-1)}{p-2}}D_{2}^{\frac{2p}{p-2}}\|n\|_{L^{\infty}L^{p}}^{\frac{2p(p-1)}{p-2}}+p^{4}D_{4}^{4}\|n\|_{L^{\infty}L^{p}}^{2p} \right)\\\leq&\tilde{C}\left(1+p^{4}\|n\|_{L^{\infty}L^{p}}^{\frac{2p(p-1)}{p-2}}+\frac{p-2}{p-1}p^{\frac{4(p-1)}{p-2}}\|n\|_{L^{\infty}L^{p}}^{\frac{2p(p-1)}{p-2}} \right)\\\leq&\tilde{C}\max\left(1,p^{8}\|n\|_{L^{\infty}L^{p}}^{\frac{2p(p-1)}{p-2}} \right),
			\end{aligned}
		\end{equation}
		where $ \tilde{C} $ is a positive constant depending on $ \|n_{\rm in}\|_{L^{\infty}}, D_{2} $ and $ D_{4}. $
		
		\textbf{Step III. Estimate $ \|n\|_{L^{\infty}L^{\infty}}. $}
		Recall $ p=2^{j} $ for $ j\geq 2, $ then using iterative estimation, it follows from (\ref{ n 2p}) that
			\begin{equation}\label{iterative}
				\begin{aligned}
					&\sup_{t\geq 0}\int_{\mathbb{T}\times\mathbb{R}}|n|^{2^{j+1}}dxdy\\\leq&\tilde{C}\max\left[1,2^{8j}\left(\sup_{t\geq 0}\int_{\mathbb{T}\times\mathbb{R}}|n|^{2^{j}}dxdy \right)^{2\left(\frac{2^{j}-1}{2^{j}-2} \right)} \right]\\\leq&\tilde{C}\max\left\{1,2^{8j}\left[\tilde{C}\max\left(1,2^{8(j-1)}\left(\sup_{t\geq 0}\int_{\mathbb{T}\times\mathbb{R}}|n|^{2^{j-1}}dxdy \right)^{2\left(\frac{2^{j-1}-1}{2^{j-1}-2} \right)} \right) \right]^{2\left(\frac{2^{j}-1}{2^{j}-2} \right)} \right\}\\
					\leq&\tilde{C}+\tilde{C}^{1+{\frac{2^j-1}{2^{j-1}-1}}}2^{8j}\\
					&+\cdots+\tilde{C}^{1+\sum_{k=1}^{j-2}2^{k}\prod_{l=1}^{k}\frac{2^{j-l+1}-1}{2^{j-l+1}-2}}2^{8j+8\sum_{k=1}^{j-3}2^{k}(j-k)\prod_{l=1}^{k}\frac{2^{j-l+1}-1}{2^{j-l+1}-2}}\\ &+\tilde{C}^{1+\sum_{k=1}^{j-2}2^{k}\prod_{l=1}^{k}\frac{2^{j-l+1}-1}{2^{j-l+1}-2}}2^{8j+8\sum_{k=1}^{j-2}2^{k}(j-k)\prod_{l=1}^{k}\frac{2^{j-l+1}-1}{2^{j-l+1}-2}}D_{4}^{4\sum_{k=1}^{j-1}2^{k}\prod_{l=1}^{k}\frac{2^{j-l+1}-1}{2^{j-l+1}-2}}\\				\leq&\tilde{C}^{1+\sum_{k=1}^{j-2}2^{k}\prod_{l=1}^{k}\frac{2^{j-l+1}-1}{2^{j-l+1}-2}}2^{8j+8\sum_{k=1}^{j-2}2^{k}(j-k)\prod_{l=1}^{k}\frac{2^{j-l+1}-1}{2^{j-l+1}-2}}D_{4}^{4\sum_{k=1}^{j-1}2^{k}\prod_{l=1}^{k}\frac{2^{j-l+1}-1}{2^{j-l+1}-2}}\\
					&+(j-1)\tilde{C}^{1+\sum_{k=1}^{j-2}2^{k}\prod_{l=1}^{k}\frac{2^{j-l+1}-1}{2^{j-l+1}-2}}2^{8j+8\sum_{k=1}^{j-3}2^{k}(j-k)\prod_{l=1}^{k}\frac{2^{j-l+1}-1}{2^{j-l+1}-2}}.
				\end{aligned}
			\end{equation}
		Due to 
		\begin{equation*}
			\begin{aligned}
				&\sum_{k=1}^{j-1}2^{k}\prod_{l=1}^{k}\frac{2^{j-l+1}-1}{2^{j-l+1}-2}\\=&\sum_{k=1}^{j-1}2^{k}\left(\frac{2^{j}-1}{2^{j}-2} \right)\times\left(\frac{2^{j-1}-1}{2^{j-1}-2} \right)\times\cdots\times\left(\frac{2^{j-k+2}-1}{2^{j-k+2}-2} \right)\times\left(\frac{2^{j-k+1}-1}{2^{j-k+1}-2} \right)\\=&\sum_{k=1}^{j-1}2^{k}\frac{2^j-1}{2^k\left(2^{j-k}-1\right)}=\sum_{k=1}^{j-1}\frac{2^j-1}{2^{j-k}-1},
			\end{aligned}
		\end{equation*}	
		and
		\begin{equation*}
			\begin{aligned}
				&8j+8\sum_{k=1}^{j-2}2^{k}(j-k)\prod_{l=1}^{k}\frac{2^{j-l+1}-1}{2^{j-l+1}-2}
=8j+8\sum_{k=1}^{j-2}(j-k)\frac{2^{j+1}-2}{2^{j+1-k}-2},
			\end{aligned}
		\end{equation*}
		there hold
		\begin{equation*}
			\begin{aligned}
			0\leq \lim_{j\to\infty}\frac{1}{2^{j+1}}\left(\sum_{k=1}^{j-1}2^{k}\prod_{l=1}^{k}\frac{2^{j-l+1}-1}{2^{j-l+1}-2} \right)=&\lim_{j\to\infty}\left[\frac12\left(1-\frac{1}{2^{j}}\right)\sum_{k=1}^{j-1}\frac{1}{2^{j-k}-1}\right] \\\leq&\frac12\sum_{n=0}^{\infty}\frac{1}{2^{n}}=1,
			\end{aligned}
		\end{equation*}
		and
		\begin{equation*}
			\begin{aligned}
			0\leq&\lim_{j\to\infty}\frac{1}{2^{j+1}}\left[8j+8\sum_{k=1}^{j-2}2^{k}(j-k)\prod_{l=1}^{k}\frac{2^{j-l+1}-1}{2^{j-l+1}-2} \right]\\=&8\lim_{j\to\infty}\frac{j}{2^{j+1}}+8\lim_{j\to\infty}\left(\frac{2^{j+1}-2}{2^{j+1}}\sum_{k=1}^{j-2}\frac{j-k}{2^{j+1-k}-2} \right)\leq 4\sum_{n=0}^{\infty}\frac{n+1}{2^{n}}=12,
			\end{aligned}
		\end{equation*}
		which along  with (\ref{iterative}) implies that
		\begin{equation*}
			\begin{aligned}
				\|n\|_{L^{\infty}L^{\infty}}\leq \hat{C}:=K_{\infty},
			\end{aligned}
		\end{equation*}
		where $\hat{C}$ is a positive constant depending on $\|n_{\rm in}\|_{L^{1}\cap L^{\infty}}$.

		To sum up, we complete the proof.
		
	\end{proof}
	
	\section{Proof of Theorem \ref{result 2}}\label{theorem}
	\begin{proof}[Proof of Theorem \ref{result 2}]
		Recall (\ref{c*}) and (\ref{energy n0 1}) in the proof of {\textbf{Lemma \ref{lem: zero mode}}}, where we use the interpolation inequality
		\begin{equation*}
			\|n_{0}\|_{L^{4}}\leq C_{*}\|n_{0}\|_{L^{1}}^{\frac12}\|\partial_{y}n_{0}\|_{L^{2}}^{\frac12},
		\end{equation*}
		and  $C_{*}^{4}M^{2}<3. $  From \textbf{Remark \ref{interpolation}},  notice that $ C_{*}=\left(\frac{4\pi^{2}}{9}\right)^{-\frac14}, $ which follows that $ M<\frac{2\pi}{\sqrt{3}}. $
		Then combining it with {\textbf{Theorem \ref{result}}}, the proof is complete.
	\end{proof}
	
	\section*{Acknowledgement}
	The authors would like to thank Professor Wei Wang and Dr. Shikun Cui for some helpful communications. W. Wang was supported by National Key R\&D Program of China (No. 2023YFA1009200) and NSFC under grant 12071054 and 12471219.
	
	\section*{Declaration of competing interest}
	The authors declare that they have no known competing financial interests
	or personal relationships that could have appeared to
	influence the work reported in this paper.
	\section*{Data availability}
	No data was used in this paper.

\end{document}